\tikzstyle{empty}=[fill=none, draw=black, shape=circle, tikzit fill=white]
\tikzstyle{full}=[fill=black, draw=black, shape=circle, tikzit shape=circle]
\tikzstyle{red}=[fill=red, draw=red, shape=circle, tikzit shape=circle]
\tikzstyle{gray}=[fill=gray, draw=gray, shape=circle, tikzit shape=circle]
\tikzstyle{pink}=[fill=white, draw=black, shape=circle, tikzit draw=black, tikzit fill={rgb,255: red,160; green,6; blue,255}]
\tikzstyle{green}=[fill=white, draw=black, shape=circle, tikzit fill={rgb,255: red,0; green,181; blue,12}, tikzit draw=black]
\tikzstyle{orange}=[fill=white, draw=black, shape=circle, tikzit fill={rgb,255: red,255; green,174; blue,34}, tikzit draw=black]
\tikzstyle{blue}=[fill={rgb,255: red,64; green,0; blue,255}, draw=black, shape=circle]
\tikzstyle{edgeD}=[-, dashed=dashed, draw=black, fill=none]
\tikzstyle{edgeGreen}=[-, draw={rgb,255: red,0; green,181; blue,12}]
\tikzstyle{edgeOrange}=[-, draw={rgb,255: red,255; green,174; blue,34}]
\tikzstyle{edgeRed}=[-, draw=red]
\tikzstyle{edgeB}=[-, draw=black, fill=none]
\tikzstyle{edgeFill}=[fill={rgb,255: red,230;green,230;blue,230},draw=none]
\def\NZQ{\mathbb}               % the font for N,Z,Q,R,C
\def\NN{{\NZQ N}}
\def\ZZ{{\NZQ Z}}
\def\FF{{\NZQ F}}
\def\G{{\mathcal G}}
\def\F{{\mathcal F}}
\def\pd{\textup{proj}\phantom{.}\!\textup{dim}}
\def\opn#1#2{\def#1{\operatorname{#2}}} % to make operators
\opn\chara{char} \opn\length{\ell} \opn\pd{pd} \opn\rk{rk}
\opn\projdim{proj\,dim} \opn\injdim{inj\,dim} \opn\rank{rank}
\opn\depth{depth} \opn\grade{grade} \opn\height{height}
\opn\embdim{emb\,dim} \opn\codim{codim}
\opn\Tr{Tr} \opn\bigrank{big\,rank}
\opn\superheight{superheight}\opn\lcm{lcm}
\opn\trdeg{tr\,deg}%\emph{
\opn\reg{reg} \opn\lreg{lreg} \opn\ini{in} \opn\lpd{lpd}
\opn\size{size} \opn\sdepth{sdepth}
\opn\link{link}\opn\fdepth{fdepth}\opn\lex{lex}
\opn\tr{tr}
\opn\type{type}
\opn\gap{gap}
\opn\diam{diam}
\opn\Mod{Mod}
\opn\div{div} \opn\Div{Div} \opn\cl{cl} \opn\Cl{Cl}
\opn\Spec{Spec} \opn\Supp{Supp} \opn\supp{supp} \opn\Sing{Sing}
\opn\Ass{Ass} \opn\Min{Min}\opn\Mon{Mon}
\opn\Ann{Ann} \opn\Rad{Rad} \opn\Soc{Soc}
\opn\Im{Im} \opn\Ker{Ker} \opn\Coker{Coker} \opn\Am{Am}
\opn\Hom{Hom} \opn\Tor{Tor} \opn\Ext{Ext} \opn\End{End}
\opn\Aut{Aut} \opn\id{id}
\opn\nat{nat}
\opn\pff{pf}%   \pf exists already
\opn\Pf{Pf} \opn\GL{GL} \opn\SL{SL} \opn\mod{mod} \opn\ord{ord}
\opn\Gin{Gin} \opn\Hilb{Hilb}\opn\sort{sort}
\opn\PF{PF}\opn\Ap{Ap}
\opn\dist{dist}
\opn\aff{aff}
\opn\relint{relint} \opn\st{st}
\opn\lk{lk} \opn\cn{cn} \opn\core{core} \opn\vol{vol}  \opn\inp{inp} \opn\nilpot{nilpot}
\opn\link{link} \opn\star{star}\opn\lex{lex}\opn\set{set}
\opn\width{wd}
\opn\Fr{F}
\opn\QF{QF}
\opn\G{G}
\opn\type{type}\opn\res{res}
\opn\conv{conv}
\opn\sr{sr}
\opn\gr{gr}
\def\pot#1#2{#1[\kern-0.28ex[#2]\kern-0.28ex]}
\opn\dirlim{\underrightarrow{\lim}}
\opn\inivlim{\underleftarrow{\lim}}
\def\Implies{\ifmmode\Longrightarrow \else
	\unskip${}\Longrightarrow{}$\ignorespaces\fi}
\def\implies{\ifmmode\Rightarrow \else
	\unskip${}\Rightarrow{}$\ignorespaces\fi}
\def\iff{\ifmmode\Longleftrightarrow \else
	\unskip${}\Longleftrightarrow{}$\ignorespaces\fi}
\newtheorem{Theorem}{Theorem}[section]
\newtheorem{Lemma}[Theorem]{Lemma}
\newtheorem{Corollary}[Theorem]{Corollary}
\newtheorem{Proposition}[Theorem]{Proposition}
\let\epsilon\varepsilon
\let\kappa=\varkappa
\def\qed{\ifhmode\textqed\fi
	\ifmmode\ifinner\hfill\quad\qedsymbol\else\dispqed\fi\fi}
\def\textqed{\unskip\nobreak\penalty50
	\hskip2em\hbox{}\nobreak\hfill\qedsymbol
	\parfillskip=0pt \finalhyphendemerits=0}
\def\dispqed{\rlap{\qquad\qedsymbol}}
\opn\dis{dis}
\def\pnt{{\raise0.5mm\hbox{\large\bf.}}}
\opn\Lex{Lex}
\opn\Max{Max}
\opn\Shad{Shad}
\opn\astab{astab}
\def\p{\mathfrak{p}}
\def\m{\mathfrak{m}}
\def\bideg{\textup{bideg}}
\opn\v{v}
\begin{document}

	\title{Asymptotic behaviour of the $\mbox{v}$-number of homogeneous ideals}
	\author{Antonino Ficarra, Emanuele Sgroi}
	%\date{\today}
	
	\address{Antonino Ficarra, Department of mathematics and computer sciences, physics and earth sciences, University of Messina, Viale Ferdinando Stagno d'Alcontres 31, 98166 Messina, Italy}
	\email{antficarra@unime.it}
	
	\address{Emanuele Sgroi, Department of mathematics and computer sciences, physics and earth sciences, University of Messina, Viale Ferdinando Stagno d'Alcontres 31, 98166 Messina, Italy}
	\email{emasgroi@unime.it}
	
	\thanks{
	}
	
	\subjclass[2020]{Primary 13F20; Secondary 13F55, 05C70, 05E40.}
	
	\keywords{graded ideals, $\v$-number, asymptotic behaviour, primary decomposition}
	
	\maketitle
	
	\begin{abstract}
		Let $I$ be a graded ideal of a standard graded polynomial ring $S$ with coefficients in a field $K$. The asymptotic behaviour of the $\v$-number of the powers of $I$ is investigated. Natural lower and upper bounds which are linear functions in $k$ are determined for $\v(I^k)$. We call $\v(I^k)$ the $\v$-function of $I$. We prove that $\v(I^k)$ is a linear function in $k$ for $k$ large enough, of the form $\v(I^k)=\alpha(I)k+b$, where $\alpha(I)$ is the initial degree of $I$, and $b\in\ZZ$ is a suitable integer. For this aim, we construct new blowup algebras associated to graded ideals. Finally, for a monomial ideal in two variables, we compute explicitly its $\v$-function.
	\end{abstract}
	
	\section*{Introduction}
	
	In 1921, Emmy Noether revolutionized Commutative Algebra by establishing the primary decomposition Theorem for Noetherian rings \cite{EN1921}. It says that any ideal $I$ of a Noetherian ring $R$ can be decomposed as the irredundant intersection of finitely many primary ideals $I=Q_1\cap\dots\cap Q_t$ and $\Ass(I)=\{\sqrt{Q_1},\dots,\sqrt{Q_t}\}$, the \textit{set of associated primes of $I$}, is uniquely determined. This fundamental result is a landmark in Commutative Algebra, and always inspires new exciting research trends. A basic question in the seventies was the following. What is the asymptotic behaviour of the set $\Ass(I^k)$ for $k\gg0$ large enough? In 1976, it was predicted by Ratliff \cite{R1976}, and later proved by Brodmann in 1979 \cite{B79}, that $\Ass(I^k)$ stabilizes. That is, there exists $k_0>0$ such that $\Ass(I^{k+1})=\Ass(I^k)$ for all $k\ge k_0$. Another remarkable result of Brodmann says that $\depth(R/I^k)$ is constant for $k\gg0$ \cite{B79a}. Suppose furthermore that $I$ is a graded ideal of a standard graded polynomial ring $S=K[x_1,\dots,x_n]$ with coefficients in a field $K$. In 1999, Kodiyalam \cite{Kod99}, and, independently, Cutkosky, Herzog and Trung \cite{CHT99}, showed that the Castelnuovo--Mumford regularity of $S/I^k$ is a linear function in $k$ for $k\gg0$. The legacy of Brodmann theorem opened up the most flourished research topic in Commutative Algebra: the asymptotic behaviour of the homological invariants of (ordinary) powers of graded ideals, see \cite{CHV2020}.
	
	Now, let $S=K[x_1,\dots,x_n]$ be the standard graded polynomial ring with coefficients in a field $K$, $I\subset S$ be a graded ideal and $\m=(x_1,\dots,x_n)$ be the maximal ideal. Note that $S$ is Noetherian. The graded version of the primary decomposition theorem says that for any prime $\p\in\Ass(I)$, there exists a homogeneous element $f\in S$ such that $(I:f)=\p$. It is natural to define the following invariants. Denote by $S_d$ the $d$th graded component of $S$. The \textit{$\v$-number of $I$ at $\p$} is defined as
	$$
	\v_\p(I)\ =\ \min\{d\ :\ \textit{there exists}\ f\in S_d\ \textit{such that}\ (I:f)=\p\}.
	$$
	Whereas, the \textit{$\v$-number of $I$} is defined as
	$$
	\v(I)\ =\ \min\{d\ :\ \textit{there exists}\ f\in S_d\ \textit{such that}\ (I:f)\in\Ass(I)\}.
	$$
	
	The concept of $\v$-number was introduced by Cooper \textit{et all} in \cite{CSTVV20}, and further studied in \cite{ASS23,BM23,Civan2023,F2023,GMOSV2023,GRV21,JS23,JV21,LSV21,S2023,SS20,VPV23}.\medskip
	
	This invariant plays an important role in Algebraic Geometry and in the theory of (\textit{projective}) \textit{Reed-Muller-type codes} \cite{CSTVV20,DRT2001,GLS2005,camps-sarabia-sarmiento-vila,rth-footprint,sarabia7,GRT}. Let $\mathbb{X}$ be a finite set of points of the projective plane $\mathbb{P}^{s-1}$, and let $\delta_{\mathbb{X}}(d)$ be the minimum distance function of the projective Reed-Muller-type code $C_{\mathbb{X}}(d)$. Then $\delta_{\mathbb{X}}(d)=1$ if and only if $\v(I(\mathbb{X}))\le d$ \cite[Corollary 5.6]{CSTVV20}. In such article, for a radical complete intersection ideal $I$, the famous Eisenbud-Green-Harris conjecture \cite{Eisenbud-Green-Harris} is shown to be equivalent to \cite[Conjecture 6.2]{CSTVV20} (see \cite[Proposition 6.8]{CSTVV20}). This latter conjecture is related to the $\v$-number. Indeed, for such an ideal $I$ we have $\v(I)=\reg(S/I)$. For a nice summary, see \cite[Section 12]{VPV23}.
	
	The $\v$-number of edge ideals was studied in \cite{JV21}. A graph $G$ belongs to the class $W_2$ if and only if $G$ is well--covered without isolated vertices, and $G\setminus v$ is well--covered for all vertices $v\in V(G)$. Let $I(G)\subset S=K[x_v:v\in V(G)]$ be the edge ideal of $G$. Then $G$ is in $W_2$ if and only if $\v(I(G))=\dim(S/I(G))$ \cite[Theorem 4.5]{JV21}. The $\v$-number of binomial edge ideals was recently considered in \cite{ASS23} and \cite{JS23}.\smallskip
	
	In this article, we investigate the eventual behaviour of the function $\v(I^k)$ for $k\gg0$, where $I\subset S$ is a graded ideal. Such a function, for large $k$ measures the ``asymptotic homogeneous growth" of the primary decomposition of $I^k$. We prove that $\v(I^k)=\alpha(I)k+b$ is a linear function for all $k\gg0$, where $\alpha(I)$ is the initial degree of $I$ and $b\in\ZZ$ is a suitable integer.\smallskip
	
	The article is structured as follows. In Section \ref{sec1:FS2}, we recall how to compute the $\v$-number of a graded ideal $I\subset S$ (Theorem \ref{Thm:GRVtheorem}) as shown by Grisalde, Reyes and Villarreal \cite{GRV21}.  For a finitely generated graded $S$-module $M=\bigoplus_{d\ge0}M_d\ne0$, we set $\alpha(M)=\min\{d:M_d\ne0\}$ and $\omega(M)=\max\{d:(M/\m M)_d\ne0\}$. In the next theorem, the bar $\overline{\phantom{p}}$ denotes the residue class modulo $I$.\medskip\\
	{\bf Theorem \ref{Thm:GRVtheorem}}
	\textit{Let $I\subset S$ be a graded ideal and let $\p\in\Ass(I)$. The following hold.
		\begin{enumerate}
			\item[\textup{(a)}] If $\mathcal{G}=\{\overline{g_1},\dots,\overline{g_r}\}$ is a homogeneous minimal generating set of $(I:\p)/I$, then
			$$
			\v_\p(I)=\min\{\deg(g_i)\ :\ 1\le i\le r\ \textit{and}\ (I:g_i)=\p\}.
			$$
			\item[\textup{(b)}] $\v(I)=\min\{\v_\p(I):\p\in\Ass(I)\}$.
			\item[\textup{(c)}] $\v_\p(I)\ge\alpha((I:\p)/I)$, with equality if $\p\in\Max(I)$.
			\item[\textup{(d)}] If $I$ has no embedded primes, then $\v(I)=\min\{\alpha((I:\p)/I):\p\in\Ass(I)\}$.
	\end{enumerate}}\medskip
	
	Firstly, we determine the ``local" numbers $\v_\p(I)$, for all $\p\in\Ass(I)$. After computing a basis of the $S$-module $(I:\p)/I$, we select a generator $\overline{f}\in(I:\p)/I$ of least degree $d$ such that $(I:f)=\p$.  This latter condition is automatically satisfied if $\p\in\Max(I)$. Then $\v_\p(I)=d$ and $\v(I)=\min\{\v_\p(I):\p\in\Ass(I)\}$.
	
	Let $I\subset S$ be a graded ideal, we call $\v(I^k)$ the \textit{$\v$-function} of $I$. In Section \ref{sec2:FS2}, we investigate the asymptotic behaviour of $\v(I^k)$ for $k\gg0$. By Theorem \ref{Thm:GRVtheorem}(b), we have $\v(I^k)=\min\{\v_\p(I^k):\p\in\Ass(I^k)\}$. By Brodmann \cite{B79}, $\Ass(I^k)=\Ass(I^{k+1})$ for all $k\gg0$. We denote this common set by $\Ass^\infty(I)$, and call each prime $\p\in\Ass^\infty(I)$ a \textit{stable prime ideal} of $I$. Moreover, let $\Max^\infty(I)$ be the set of stable prime ideals of $I$ maximal with respect to the inclusion. Thus, $\v(I^k)=\min\{\v_\p(I^k):\p\in\Ass^\infty(I)\}$ for $k\gg0$. To understand the asymptotic behaviour of the $\v$-function, we consider the \textit{$\v_\p$-functions} $\v_\p(I^k)$ for each $\p\in\Ass^\infty(I)$. In the classical case, to prove the asymptotic linearity of the Castelnuovo--Mumford regularity of the powers of $I$, $\reg(I^k)$, one introduces the Rees ring of $I$, $\mathcal{R}(I)=\bigoplus_{k\ge0}I^k$, and shows that this is a bigraded finitely generated module over a suitable polynomial ring \cite{CHT99}.
	
	Let $\p\in\Ass^\infty(I)$. For the $\v_\p$-function $\v_\p(I^k)$, we consider a similar approach as above. It should be noted, however, that the $S$-module $(I^k:\p)/I^k$ has a more subtle module structure than the ordinary power $I^k$. We introduce the module
	$$
	\textup{Soc}_\p(I)\ =\ \bigoplus_{k\ge0}(I^k:\p)/I^k.
	$$
	over the ring $\mathcal{F}_\p(I)\ =\ \bigoplus_{k\ge0}(I^k/\p I^k)$.
	
	A priori, it is not clear that $\textup{Soc}_\p(I)$ is a finitely generated bigraded $\mathcal{F}_\p(I)$-module. This is shown in Theorem \ref{Thm:Soc_p(I)} by carefully analyzing the module structure of $\Soc_\p(I)$. We prove that $\Soc_\p(I)$ is equal to a truncation of the ideal $(0:_{\gr_I(S)}\p)$, and this ideal of $\gr_I(S)$ is finitely generated as a $\mathcal{F}_\p(I)$-module. Here $\gr_I(S)=\bigoplus_{k\ge0}(I^k/I^{k+1})$ denotes the associated graded ring of $I$. The proof relies essentially on a property showed by Ratliff \cite[Corollary 4.2]{R1976}, namely that $(I^{k+1}:I)=I^k$ for all $k\gg0$, and on the fact that $\gr_I(S)$ is Noetherian ring \cite[Proposition (10.D)]{Mat89}.\smallskip
	
	The first main result in Section \ref{sec2:FS2} is\smallskip\\
	{\bf Theorem \ref{Thm:alpha(Ik:p)/Ik}.}
		\textit{Let $I\subset S=K[x_1,\dots,x_n]$ be a graded ideal, and let $\p\in\Ass^\infty(I)$. Then, the following holds.
		\begin{enumerate}
			\item[\textup{(a)}] For all $k\ge1$, we have
			$$
			\alpha((I^k:\p)/I^k)\ \le\ \v_\p(I^k)\ \le\ \omega((I^k:\p)/I^k).
			$$
			\item[\textup{(b)}] The functions $\alpha((I^k:\p)/I^k)$, $\omega((I^k:\p)/I^k)$ are linear in $k$ for $k\gg0$.
			\item[\textup{(c)}] There exist eventually linear functions $f(k)$ and $g(k)$ such that
			$$
			f(k)\le\v(I^k)\le g(k), \ \ \textit{for all}\ \ k\gg0.
			$$
		\end{enumerate}}
	
	Theorem \ref{Thm:alpha(Ik:p)/Ik}(b) follows by a careful analysis of the bigraded structure of $\textup{Soc}_\p(I)$ and in the end boils down to a linear programming argument (Proposition \ref{Prop:LinearProgramm}).
	
	As another remarkable consequence of Theorem \ref{Thm:Soc_p(I)}, if $I\subset S$ is a graded ideal and $\p\in\Ass^\infty(I)$, we show in Proposition \ref{Prop:ItakenOut} that $(I^k:\p)/I^k=(I(I^{k-1}:\p))/I^k$ for all $k\gg0$. In the more general case that $I$ is an ideal of a Noetherian commutative domain $R$ and $\p\in\Ass^\infty(I)$, with an analogous proof, one can also show that  for all $k\gg0$ we have $(I^k:\p)=I(I^{k-1}:\p)$ (Corollary \ref{Cor:ItakenOut-Ratliff}). This result complements the property $(I^k:I)=I^{k-1}$, $k\gg0$, firstly proved by Ratliff \cite[Corollary 4.2]{R1976}. To the best of our knowledge, it was previously unknown.
	
	Our main result in the paper shows that the functions $\v_\p(I^k)$, $\p\in\Ass^\infty(I)$, and $\v(I^k)$ are indeed linear functions in $k$ for all large $k$.\medskip\\
	{\bf Theorem \ref{Thm:v-function-linear}.} \textit{Let $I\subset S=K[x_1,\dots,x_n]$ be a graded ideal. Then, $\v_\p(I^k)$, for all for $\p\in\Ass^\infty(I)$, and $\v(I^k)$ are linear functions in $k$, for $k\gg0$.}\medskip
	
	Let $I\subset S$ be a graded ideal, and let $\p\in\Ass^\infty(I)$. To prove this theorem, we construct another $\F_\p(I)$-module, which we denote by $\Soc_\p^*(I)$, and is defined as a suitable quotient of $\Soc_\p(I)$. In the course of the proof of Theorem \ref{Thm:v-function-linear}, we show that $\v_\p(I^k)=\alpha(\Soc_\p^*(I)_{(*,k)})$. Using again the linear programming argument given in Proposition \ref{Prop:LinearProgramm}, we then conclude that this is a linear function in $k$ for $k\gg0$.
	
	In the last part of the section, we prove some other structural properties of the functions $\v_\p(I^k)$. In Proposition \ref{Prop:vpk+1<k}, we show that $$\v_{\p}(I^{k-1})+\alpha(I)\le\v_\p(I^k)\le\v_{\p}(I^{k-1})+\omega(I),$$ for all $k\gg0$. As nice consequences of this fact, we obtain that the functions $\v_\p(I^k)$, ($\p\in\Ass^\infty(I)$), and $\v(I^k)$ are strictly increasing, for $k\gg0$ (Corollary \ref{Cor:NiceConsequences}).
	
	Due to Theorems \ref{Thm:v-function-linear}, we have that $\v(I^k)=ak+b$ for all $k\gg0$, where $a$ and $b$ are suitable integers. Hence, the limit $\lim_{k\rightarrow\infty}\v(I^k)/k$ exists and it is a finite integer equal to $a$. A natural question arises, what is the number $a$? In Section \ref{sec4:FS2} we address this question, and we obtain the following quite surprising answer:\medskip\\
	{\bf Theorem \ref{Thm:limExistv(I^k)}} \textit{Let $I\subset S$ be a graded ideal. Then
		$$
		\lim_{k\rightarrow\infty}\frac{\v(I^k)}{k}\ =\ \alpha(I).
		$$}\smallskip
	
	The proof of this theorem is based upon Lemma \ref{Lemma:Elementary-a_k}(d). In order to apply it, we find lower and upper bounds for $\v(I^k)$, $k\gg0$, of the form $\alpha(I)k+c$, where $c$ is a suitable constant. The lower bound is easily obtained in Lemma \ref{Lemma:v-funct-lower-bounds}. To obtain the upper bound, we follow an argument given in \cite[Theorem 4.7]{BM23} and generalize a result due to Saha and Sengupta \cite[Proposition 3.11]{SS20}, who proved it only for monomial ideals (Proposition \ref{Prop:General-Saha-Sengupta}). In view of Theorem \ref{Thm:v-function-linear} and \cite{CHT99,Kod99}, we could expect that $\v(I^k)\le\reg(I^k)$ for all $k\gg0$.
	
	Note that the local version of Theorem \ref{Thm:limExistv(I^k)}, namely $\v_\p(I^k)=\alpha(I)k+b$ for all $k\gg0$ and some integer $b$, is false in general, as we show with an example.
	
	Section \ref{sec5:FS2} concerns monomial ideals $I\subset S=K[x,y]$ in two variables. Denote by $G(I)=\{u_1,\dots,u_m\}$ the unique minimal monomial generating set of $I$, with $u_i=x^{a_i}y^{b_i}$ for all $i$. Then $I$ determines the sequences ${\bf a}:a_1>a_2>\dots>a_m$ and ${\bf b}:b_1<b_2<\dots<b_m$, and we set $I=I_{\bf a,b}$. Conversely, given any two such sequences ${\bf a}$ and ${\bf b}$, $\{x^{a_1}y^{b_1},\dots,x^{a_m}y^{b_m}\}$ is the minimal generating set of a monomial ideal of $S$. In terms of ${\bf a}$ and ${\bf b}$ we determine $\Ass^\infty(I_{\bf a,b})$ (Corollary \ref{Cor:Ass(I)K[x,y]}) and the $\v$-number $\v(I_{\bf a,b})$ (Theorem \ref{Thm:vmIab,xy}). Let $\v(I_{\bf a,b}^k)=ak+b$ for $k\gg0$, where $a$ and $b$ are suitable integers. By \cite[Theorem 2.3(c)]{F2023} we have $b\ge-1$. On the other hand, for any integers $a\ge1$ and $b\ge-1$, we are able to construct a monomial ideal of $S$ such that $\v(I^k)=ak+b$ for all $k\ge1$ (Theorem \ref{Thm:v(I^k)=ak+b}).
	
	\section{How to compute the $\v$-number of a graded ideal?}\label{sec1:FS2}
	
	Let $I$ be an ideal of a Noetherian domain $R$. We denote the set of associated primes of $I$ by $\Ass(I)$, and by $\Max(I)$ the set of associated primes of $I$ that are maximal with respect to the inclusion. It is clear that $I$ has no embedded primes if and only if $\Ass(I)=\Max(I)$.

	Let $S=K[x_1,\dots,x_n]=\bigoplus_dS_d$ be the standard graded polynomial ring with $n$ variables and coefficients in a field $K$, and let $\m=(x_1,\dots,x_n)$ be the graded maximal ideal. The concept of $\v$-number was introduced by Cooper \textit{et all} in \cite{CSTVV20}. Let $I\subset S$ be a graded ideal and let $\p\in\Ass(I)$. Then, the \textit{$\v$-number of $I$ at $\p$} is defined as
	$$
	\v_\p(I)\ =\ \min\{d\ :\ \textit{there exists}\ f\in S_d\ \textit{such that}\ (I:f)=\p\}.
	$$
	Whereas, the \textit{$\v$-number of $I$} is defined as
	$$
	\v(I)\ =\ \min\{d\ :\ \textit{there exists}\ f\in S_d\ \textit{such that}\ (I:f)\in\Ass(I)\}.
	$$
	
	Note that if $I=\m=(x_1,\dots,x_n)$, then $\v_\m(I)=\v(I)=0$.
	
	The following result due to Grisalde, Reyes and Villarreal \cite[Theorem 3.2]{GRV21} shows how to compute the $\v$-number of a graded ideal. For a finitely generated graded $S$-module $M=\bigoplus_{d\ge0}M_d\ne0$, we call $\alpha(M)=\min\{d:M_d\ne0\}$ the \textit{initial degree} of $M$. In the next theorem, the bar $\overline{\phantom{p}}$ denotes the residue class modulo $I$.
	\begin{Theorem}\label{Thm:GRVtheorem}
		Let $I\subset S$ be a graded ideal and let $\p\in\Ass(I)$. The following hold.
		\begin{enumerate}
			\item[\textup{(a)}] If $\mathcal{G}=\{\overline{g_1},\dots,\overline{g_r}\}$ is a homogeneous minimal generating set of $(I:\p)/I$, then
			$$
			\v_\p(I)=\min\{\deg(g_i)\ :\ 1\le i\le r\ \textit{and}\ (I:g_i)=\p\}.
			$$
			\item[\textup{(b)}] $\v(I)=\min\{\v_\p(I):\p\in\Ass(I)\}$.
			\item[\textup{(c)}] $\v_\p(I)\ge\alpha((I:\p)/I)$, with equality if $\p\in\Max(I)$.
			\item[\textup{(d)}] If $I$ has no embedded primes, then $\v(I)=\min\{\alpha((I:\p)/I):\p\in\Ass(I)\}$.
		\end{enumerate}
	\end{Theorem}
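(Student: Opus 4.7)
The plan is to view all four statements through the module $M=(I:\p)/I$, which is where homogeneous witnesses $\overline f$ of $(I:f)=\p$ live, and then to isolate which elements of $M$ actually realise the colon $\p$ rather than some strictly larger ideal. With this framework (b) is essentially a rephrasing of the definition of $\v(I)$, and (d) is an immediate corollary of (b) together with (c) since $\Ass(I)=\Max(I)$ when $I$ has no embedded primes, so the real work lies in (a) and in the equality half of (c).

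For part (a) the direction $\v_\p(I)\le\min\{\deg(g_i):(I:g_i)=\p\}$ is immediate, since each such $g_i$ is itself a homogeneous witness. For the converse I would take a homogeneous $f\in S_d$ with $(I:f)=\p$ and $d=\v_\p(I)$, then expand $\overline f=\sum_i h_i\,\overline{g_i}$ with homogeneous $h_i$ chosen so that $\deg(h_ig_i)=d$ whenever $h_ig_i\ne 0$. Arguing by contradiction, assume that every index $i$ with $h_i\ne 0$ satisfies $(I:g_i)\supsetneq\p$, and pick $x_i\notin\p$ with $x_ig_i\in I$. The product $x=\prod x_i$ (finite, since the sum is finite) lies outside $\p$ because $\p$ is prime, yet $x\overline{g_i}=0$ for each relevant $i$, forcing $x\overline f=0$, so $x\in(I:f)=\p$, which is absurd. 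Hence some $g_i$ with $h_i\ne 0$ has $(I:g_i)=\p$, and then $\deg(g_i)\le d$.

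For part (c) the bound $\v_\p(I)\ge\alpha(M)$ is immediate because any witness $f$ gives $0\ne\overline f\in M$. For equality when $\p\in\Max(I)$ the key is a computation of $\Ass_S(M)$: since $M$ is a submodule of $S/I$ we have $\Ass_S(M)\subseteq\Ass(I)$, since $\p M=0$ every $\q\in\Ass_S(M)$ contains $\p$, and the maximality of $\p$ in $\Ass(I)$ forces $\Ass_S(M)=\{\p\}$. This means the zero-divisors of $M$ in $S$ are exactly the elements of $\p$, so $M$ is torsion-free as an $S/\p$-module. Therefore every nonzero homogeneous $\overline f\in M$ automatically satisfies $(I:f)=\p$; choosing $\overline f$ of minimal degree $\alpha(M)$ gives $\v_\p(I)\le\alpha(M)$, as required.

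The only step where I expect genuine subtlety is the contradiction in (a): one must exploit the primality of $\p$ in a non-trivial way to glue the finitely many non-witnesses $x_i\notin\p$ into a single element $x\notin\p$ annihilating $\overline f$. Every other step is a routine structural manipulation of $M=(I:\p)/I$, and parts (b) and (d) then follow formally from (a)--(c).
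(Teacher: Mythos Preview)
Your argument is correct. Note, however, that the paper does not actually prove this theorem: it is quoted verbatim from Grisalde, Reyes and Villarreal \cite[Theorem~3.2]{GRV21} and stated without proof in Section~\ref{sec1:FS2}. So there is no ``paper's own proof'' to compare against; what you have written is an independent verification of the cited result.

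For the record, your strategy matches the standard one. In~(a) the crucial point is exactly the one you flag: gluing the finitely many elements $x_i\notin\p$ into a single $x=\prod_i x_i\notin\p$ that annihilates $\overline f$, using primality of $\p$. In~(c) your computation $\Ass_S(M)=\{\p\}$ via the twin constraints $\Ass_S(M)\subseteq\Ass(I)$ and $\p\subseteq\q$ for every $\q\in\Ass_S(M)$ is precisely the intended mechanism, and it immediately gives $(I:f)=\Ann_S(\overline f)=\p$ for every nonzero homogeneous $\overline f\in M$. Parts~(b) and~(d) are, as you say, formal.
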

	
	\section{Asymptotic growth of the modules $(I^k:\p)/I^k$}\label{sec2:FS2}
	
	Let $R$ be a commutative Noetherian domain and $I\subset R$ an ideal. It is known by Brodmann \cite{B79} that $\Ass(I^k)$ stabilizes for large $k$. That is, $\Ass(I^{k+1})=\Ass(I^k)$ for all $k\gg0$. A prime ideal $\p\subset R$ such that $\p\in\Ass(I^k)$ for all $k\gg0$, is called a \textit{stable prime of $I$}.
	
	The set of the stable primes of $I$ is denoted by $\Ass^{\infty}(I)$. Likewise, $\Max^\infty(I)$ denotes the set of stable primes of $I$, maximal with respect to the inclusion. The least integer $k_0$ such that $\Ass(I^{k})=\Ass(I^{k_0})$ for all $k\ge k_0$ is denoted by $\astab(I)$.
	
	Now, let $S=K[x_1,\dots,x_n]$ be the standard graded polynomial ring, with $K$ a field, and unique graded maximal ideal $\m=(x_1,\dots,x_n)$. Let $I\subset S$ be a graded ideal and let $\p\in\Ass^\infty(I)$. In light of Theorem \ref{Thm:GRVtheorem} and Brodmann result, to understand the asymptotic behaviour of the function $\v_\p(I^k)$, one has to understand the asymptotic growth of the modules $(I^k:\p)/I^k$ for $k\gg0$.
	
	Let $M\ne0$ be a finitely generated graded $S$-module. Let $\omega(M)$ be the highest degree of a homogeneous element of the $K$-vector space $M/\m M$. Equivalently, the highest degree $j$ such that the graded Betti number $\beta_{0,j}(M)$ is non-zero. Thus $$\omega(M)=\max\{d:\beta_{0,d}(M)\ne0\}=\max\{d:\Tor^S_0(S/\m,M)_d\ne0\}.$$ Similarly, one has that $\alpha(M)=\min\{d:\Tor^S_0(S/\m,M)_d\ne0\}$.\medskip
	
	The following theorem provides natural asymptotic upper and lower bounds for the $\v$-function $\v(I^k)$ which are linear functions in $k$, for $k\gg0$.
	\begin{Theorem}\label{Thm:alpha(Ik:p)/Ik}
		Let $I\subset S=K[x_1,\dots,x_n]$ be a graded ideal, and let $\p\in\Ass^\infty(I)$. Then, the following holds.
		\begin{enumerate}
			\item[\textup{(a)}] For all $k\ge1$, we have
			$$
			\alpha((I^k:\p)/I^k)\ \le\ \v_\p(I^k)\ \le\ \omega((I^k:\p)/I^k).
			$$
			\item[\textup{(b)}] The functions $\alpha((I^k:\p)/I^k)$, $\omega((I^k:\p)/I^k)$ are linear in $k$ for $k\gg0$.
			\item[\textup{(c)}] There exist eventually linear functions $f(k)$ and $g(k)$ such that
			$$
			f(k)\le\v(I^k)\le g(k), \ \ \textit{for all}\ \ k\gg0.
			$$
		\end{enumerate} 
	\end{Theorem}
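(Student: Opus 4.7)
The plan is to dispatch the three parts in sequence, leaning on Theorem \ref{Thm:GRVtheorem} and on the bigraded module $\Soc_\p(I)=\bigoplus_{k\ge 0}(I^k:\p)/I^k$ that is introduced in the discussion preceding the theorem.

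For part (a) I would apply Theorem \ref{Thm:GRVtheorem} directly to $I^k$. The lower bound $\alpha((I^k:\p)/I^k)\le \v_\p(I^k)$ is precisely Theorem \ref{Thm:GRVtheorem}(c). For the upper bound, fix a homogeneous minimal generating set $\{\overline{g_1},\dots,\overline{g_r}\}$ of $(I^k:\p)/I^k$; the generator degrees coincide with the degrees in which $M/\m M$ is nonzero, so $\omega((I^k:\p)/I^k)=\max_i\deg(g_i)$. Theorem \ref{Thm:GRVtheorem}(a) gives $\v_\p(I^k)=\min\{\deg(g_i):(I^k:g_i)=\p\}$ taken over the nonempty (since $\p\in\Ass(I^k)$) set of good indices, hence $\v_\p(I^k)\le\max_i\deg(g_i)=\omega((I^k:\p)/I^k)$.

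Part (b) is the heart of the matter, and I expect the finite-generation step to be the main obstacle. The strategy is to package the whole family $\{(I^k:\p)/I^k\}_{k\ge 0}$ as the bigraded components of a single object and then extract asymptotic linearity from its finite generation. Concretely, one introduces the blowup-type algebra $\F_\p(I)=\bigoplus_{k\ge 0}I^k/\p I^k$ together with $\Soc_\p(I)$; the decisive technical input, Theorem \ref{Thm:Soc_p(I)}, states that $\Soc_\p(I)$ is a finitely generated bigraded $\F_\p(I)$-module. This will rest on identifying (a truncation of) $\Soc_\p(I)$ with the ideal $(0:_{\gr_I(S)}\p)\subset\gr_I(S)$, then invoking Ratliff's equality $(I^{k+1}:I)=I^k$ for $k\gg 0$ and the Noetherianity of $\gr_I(S)$. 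Given the finite generation, fix bihomogeneous generators $(d_i,k_i)$ of $\Soc_\p(I)$ and $(e_j,\ell_j)$ of $\F_\p(I)$. For each $k$, the set $\{d:(\Soc_\p(I))_{(d,k)}\ne 0\}$ is exactly $\{d_i+\sum_j a_j e_j : a_j\in\NN,\ \sum_j a_j\ell_j=k-k_i\}$, so $\alpha((I^k:\p)/I^k)$ and $\omega((I^k:\p)/I^k)$ are optimal values of parametric integer linear programs in $k$. Proposition \ref{Prop:LinearProgramm} then guarantees that both optima are linear in $k$ for $k\gg 0$.

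Part (c) follows formally. Put $f(k)=\min_{\p\in\Ass^\infty(I)}\alpha((I^k:\p)/I^k)$ and $g(k)=\min_{\p\in\Ass^\infty(I)}\omega((I^k:\p)/I^k)$. Brodmann's theorem gives $\Ass(I^k)=\Ass^\infty(I)$ for $k\gg 0$, so Theorem \ref{Thm:GRVtheorem}(b) yields $\v(I^k)=\min_{\p\in\Ass^\infty(I)}\v_\p(I^k)$, and (a) sandwiches this minimum between $f(k)$ and $g(k)$. Since $\Ass^\infty(I)$ is finite and each individual $\alpha((I^k:\p)/I^k)$, $\omega((I^k:\p)/I^k)$ is eventually linear by (b), both $f$ and $g$ are eventually linear: the pointwise minimum of finitely many linear functions is, for all $k$ large enough, realised by the single linear piece with smallest slope (ties broken by smallest intercept).
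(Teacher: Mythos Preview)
Your outline matches the paper's approach in all three parts: (a) via Theorem~\ref{Thm:GRVtheorem}, (c) via Brodmann plus (a) and (b), and (b) via the finite generation of $\Soc_\p(I)$ over $\F_\p(I)$ (Theorem~\ref{Thm:Soc_p(I)}) followed by Proposition~\ref{Prop:LinearProgramm}.

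There is one point in (b) that needs repair. You write that $\{d:\Soc_\p(I)_{(d,k)}\ne 0\}$ equals $\{d_i+\sum_j a_je_j:\sum_j a_j\ell_j=k-k_i\}$ and then take the maximum to read off $\omega((I^k:\p)/I^k)$. But $\F_\p(I)$ (equivalently $T$) has generators $x_i$ of bidegree $(1,0)$, so this set is unbounded above for every $k$; moreover $\omega$ is the top \emph{generator} degree of $(I^k:\p)/I^k$, not the top nonzero degree. The paper inserts exactly the step you are missing: a short lemma identifying $\Tor^S_0(S/\m,(I^k:\p)/I^k)$ with $\Tor^T_0(T/\m,\Soc_\p(I))_{(*,k)}$, so that $\alpha$ and $\omega$ become the extremal nonzero degrees of a finitely generated bigraded $K[y_1,\dots,y_m]$-module, to which Proposition~\ref{Prop:LinearProgramm} applies verbatim. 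Once you add that reduction, your argument is the paper's.
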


	Statement (a) follows immediately from Theorem \ref{Thm:GRVtheorem}(a). Assume for a moment that statement (b) holds, then (c) can be proved as follows. By Brodmann, we have $\v(I^k)=\min\{\v_\p(I^k):\p\in\Ass^\infty(I)\}$ for all $k\gg0$. Thus, by (a), for all $k\gg0$
	$$
	\min_{\p\in\Ass^{\infty}(I)}\alpha((I^k:\p)/I^k)\le\v(I^k)\le\min_{\p\in\Ass^{\infty}(I)}\omega((I^k:\p)/I^k).
	$$
	Setting $f(k)=\min_{\p\in\Ass^{\infty}(I)}\alpha((I^k:\p)/I^k)$ and $g(k)=\min_{\p\in\Ass^{\infty}(I)}\omega((I^k:\p)/I^k)$, by statement (b) it follows that $f(k)$ and $g(k)$ are the required eventually linear functions in $k$. Statement (c) follows.
	
	To prove statement (b), we construct a suitable module that encodes the growth of the modules $(I^k:\p)/I^k$. Indeed, we define it in the following more general context. Let $I$ be an ideal of a commutative Noetherian domain $R$ and let $\p\in\Ass^\infty(I)$. Then we set
	$$
	\Soc_\p(I)\ =\ \bigoplus_{k\ge0}(I^k:\p)/I^k,
	$$
	and $\Soc_\p(I)_k=(I^k:\p)/I^k$ for all $k\ge0$.
	
	The symbol ``$\Soc$" is used, because when $R=S$ or $R$ is local and $\p=\m$ is the (graded) maximal ideal, then $(I^k:\m)/I^k$ is the \textit{socle module} of $S/I^k$, see \cite{CHL}.
	
	The first step consists in showing that $\Soc_\p(I)$ is a finitely generated graded module over a suitable ring. For this aim, we introduce the following ring,
	$$
	\mathcal{F}_\p(I)\ =\ \bigoplus_{k\ge0}(I^k/\p I^k),
	$$
	and we set $\mathcal{F}_\p(I)_k=I^k/\p I^k$. We define addition in the obvious way and multiplication as follows. If $a\in I^k/\p I^k$ and $b\in I^\ell/\p I^\ell$, then $ab\in I^{k+\ell}/\p I^{k+\ell}$. It is routine to check that this multiplication is well--defined.
	
	As before, we note that if $R=S$ or $R$ is local and $\p=\m$ is the maximal ideal, then $\mathcal{F}_\m(I)=\bigoplus_{k\ge0}(I^k/\m I^k)$ is the well--known \textit{fiber cone} of $I$.
	
	With the notation introduced, we have
	\begin{Theorem}\label{Thm:Soc_p(I)}
		Let $I$ be an ideal of a Noetherian commutative domain $R$ and let $\p\in\Ass^{\infty}(I)$. Then, $\Soc_\p(I)$ is a finitely generated graded $\mathcal{F}_\p(I)$-module.
	\end{Theorem}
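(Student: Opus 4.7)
The plan is to identify all but finitely many graded pieces of $\Soc_\p(I)$ with the corresponding pieces of the annihilator ideal $(0:_{\gr_I(R)}\p)$ inside the associated graded ring $\gr_I(R) = \bigoplus_{k\ge 0} I^k/I^{k+1}$, and then to exploit the finite generation of this ideal coming from the Noetherianity of $\gr_I(R)$. A preliminary check shows that the multiplication $(a+\p I^\ell)(x+I^k) := ax + I^{k+\ell}$ is well defined (using $\p\cdot(I^k:\p) \subseteq I^k$) and endows $\Soc_\p(I)$ with the structure of a graded $\mathcal{F}_\p(I)$-module; the task is then to exhibit a finite system of generators.

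I would first observe that, since $\p \in \Ass^\infty(I)$, one has $I \subseteq \p$, because any associated prime of $I^k$ contains $I^k$ and hence $I$. This inclusion yields $I^{k+1} = I \cdot I^k \subseteq \p I^k$ for every $k$, so $\p I^k + I^{k+1} = \p I^k$, and there results a natural graded ring isomorphism
\[
\gr_I(R)/\p\,\gr_I(R) \;\cong\; \mathcal{F}_\p(I).
\]
Since $\p$ annihilates $(0:_{\gr_I(R)}\p)$, the latter becomes a graded $\mathcal{F}_\p(I)$-module via this identification. Moreover, as $\gr_I(R)$ is Noetherian, $(0:_{\gr_I(R)}\p)$ is finitely generated over $\gr_I(R)$, and hence over $\mathcal{F}_\p(I)$.

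Next I would define, for each $k\ge 0$, the natural map
\[
\varphi_k:\ (0:_{\gr_I(R)}\p)_k = \frac{(I^{k+1}:\p)\cap I^k}{I^{k+1}}\ \longrightarrow\ \frac{(I^{k+1}:\p)}{I^{k+1}} = \Soc_\p(I)_{k+1},
\]
sending $[x] \mapsto [x]$. This is well defined, injective, and $\mathcal{F}_\p(I)$-linear (up to a degree shift by $1$). To obtain surjectivity in large degrees I would invoke Ratliff's theorem \cite[Corollary 4.2]{R1976}, which furnishes an integer $k_0$ with $(I^{k+1}:I) = I^k$ for all $k \ge k_0$. Combined with $I \subseteq \p$, this gives
\[
(I^{k+1}:\p)\ \subseteq\ (I^{k+1}:I)\ =\ I^k \qquad (k \ge k_0),
\]
so that $(I^{k+1}:\p) \cap I^k = (I^{k+1}:\p)$ and $\varphi_k$ is an isomorphism in that range.

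To conclude, the graded $\mathcal{F}_\p(I)$-submodule $\Soc_\p(I)_{\ge k_0+1} \subseteq \Soc_\p(I)$ is isomorphic to a truncation of the finitely generated module $(0:_{\gr_I(R)}\p)$ shifted by one, hence is itself finitely generated over $\mathcal{F}_\p(I)$ (which is Noetherian as a quotient of the Noetherian ring $\gr_I(R)$). The remaining components $\Soc_\p(I)_k = (I^k:\p)/I^k$ for $1 \le k \le k_0$ are quotients of finitely generated $R$-modules, hence finitely generated over $\mathcal{F}_\p(I)_0 = R/\p$; adjoining finite generating sets for each of them to a finite generating set of $\Soc_\p(I)_{\ge k_0+1}$ produces a finite generating set for $\Soc_\p(I)$ as an $\mathcal{F}_\p(I)$-module. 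The main obstacle I expect is the careful bookkeeping of the $\mathcal{F}_\p(I)$-module structures through these identifications, in particular verifying that $\varphi_k$ is compatible with the action of $\mathcal{F}_\p(I)$ after passing through the ring-level quotient $\gr_I(R) \twoheadrightarrow \mathcal{F}_\p(I)$ and accounting correctly for the degree shift.
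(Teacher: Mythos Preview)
Your proposal is correct and follows essentially the same approach as the paper: both identify $\Soc_\p(I)_{k+1}$ with $(0:_{\gr_I(R)}\p)_k$ for $k\gg 0$ via the isomorphism $\gr_I(R)/\p\,\gr_I(R)\cong\mathcal{F}_\p(I)$, using $I\subseteq\p$ together with Ratliff's $(I^{k+1}:I)=I^k$ to obtain $(I^{k+1}:\p)\subseteq I^k$ in high degree. If anything, you are slightly more explicit than the paper in handling the finitely many low-degree components $\Soc_\p(I)_k$ for $k\le k_0$ and in noting that $\mathcal{F}_\p(I)$ is Noetherian so that the high-degree truncation inherits finite generation.
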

	\begin{proof}
		Firstly, we show that $\Soc_\p(I)$ has the structure of a graded $\mathcal{F}_\p(I)$-module. For this purpose, let $f\in I^\ell/\p I^\ell$. It is clear that multiplication by $f$ induces a map $(I^k:\p)/I^k\rightarrow(I^{k+\ell}:\p)/I^{k+\ell}$ for any $k\ge0$. Hence $\mathcal{F}_\p(I)_\ell\Soc_\p(I)_k\subseteq\Soc_\p(I)_{k+\ell}$.
		
		To prove that $\Soc_\p(I)$ is a finitely generated $\mathcal{F}_\p(I)$-module, we consider  
		$$
		J=(0:_{\gr_I(R)}\p)=\{f\in\gr_I(R):f\p=0\},
		$$
		\emph{i.e.}, the annihilator of $\p$ in the associated graded ring of $I$, $\gr_I(R)=\bigoplus_{k\ge0}(I^k/I^{k+1})$. Recall that $\gr_I(R)$ is a Noetherian ring \cite[Proposition (10.D)]{Mat89}. Thus, as an ideal of $\gr_I(R)$, $J$ is a finitely generated graded $\gr_I(R)$-module. Since $\p$ annihilates $J$, then $J$ has also the structure of a finitely generated graded $\gr_I(R)/\p\gr_I(R)$-module. But
		\begin{align*}
			\gr_I(R)/\p\gr_I(R)\ &=\ \frac{\bigoplus_{k\ge0}(I^k/I^{k+1})}{\p\bigoplus_{k\ge0}(I^k/I^{k+1})}=\frac{\bigoplus_{k\ge0}(I^k/I^{k+1})}{\bigoplus_{k\ge0}(\p I^k/I^{k+1})}\\&=\ \bigoplus_{k\ge0}\frac{I^k/I^{k+1}}{\p I^k/I^{k+1}}=\bigoplus_{k\ge0}(I^k/\p I^k)\\&=\ \mathcal{F}_\p(I).
		\end{align*}
	    Consequently, $J$ is a finitely generated graded $\mathcal{F}_\p(I)$-module.
	    
	    Let us show that $\Soc_\p(I)_{k+1}=J_k$ for $k\gg0$. For this purpose, we compute the $k$th graded component of $J$. We have
	    \begin{align*}
	    	J_k\ &=\ \{f\in\gr_I(R)_k:f\p=0\}=\{f\in I^k/I^{k+1}\ :f\p=0\}\\
	    	&=\ \{f\in I^k:f\p\in I^{k+1}\}/I^{k+1}=(\{f\in R:f\p\in I^{k+1}\}\cap I^{k})/I^{k+1}\\
	    	&=\ ((I^{k+1}:\p)\cap I^k)/I^{k+1}.
	    \end{align*}
    
    By Ratliff \cite[Corollary 4.2]{R1976}, there exists $r$ such that $(I^{k+1}:I)=I^k$ for all $k\ge r$. Whereas, by Brodmann \cite{B79}, there exists $b$ such that $\Ass(I^{k})=\Ass^{\infty}(I)$ for all $k\ge b$. Let $k^*=\max\{r,b\}$. Next, we show that $\Soc_\p(I)_{k+1}=J_k$ for $k\ge k^*$.
    
    Let $k\ge k^*$. We claim that $\p$ contains $I$. Indeed, $\p\in\Ass(I^k)$, hence $I^{k}\subseteq\p$. Let $a\in I$, then $a^k\in I^k\subseteq\p$. Since $\p$ is prime, actually $a\in\p$ and so $I\subseteq\p$. Therefore, $(I^{k+1}:\p)\subseteq(I^{k+1}:I)=I^k$ by the Ratliff property. Hence,
    $$
    J_k=((I^{k+1}:\p)\cap I^k)/I^{k+1}=(I^{k+1}:\p)/I^{k+1}=\Soc_\p(I)_{k+1}.
    $$
    Consequently, we obtain that $\Soc_\p(I)_{\ge k^*+1}=J_{\ge k^*}$, where $M_{\ge\ell}$ denotes $\bigoplus_{k\ge\ell}M_\ell$ if $M=\bigoplus_{k\ge0}M_k$ is graded. Since $J$ is finitely generated as a $\mathcal{F}_\p(I)$-module, it follows that $\Soc_\p(I)$ is a finitely generated $\mathcal{F}_\p(I)$-module as well.
	\end{proof}
	
	Now, we assume furthermore that $R=S=K[x_1,\dots,x_n]$ is the standard graded polynomial ring with $K$ a field, that $I$ is a graded ideal of $S$ and $\p\in\Ass^\infty(I)$ is a stable prime of $I$. Then, $I^k/\p I^k$ is a graded $S$-module, for all $k\ge0$. Therefore, $\mathcal{F}_\p(I)$ is in a natural way a bigraded ring:
	$$
	\mathcal{F}_\p(I)\ =\ \bigoplus_{d,k\ge0}(I^k/\p I^k)_d.
	$$
	In particular, we set $\mathcal{F}_\p(I)_{(d,k)}=(I^k/\p I^k)_d$ and $\bideg(f)=(d,k)$ for $f\in\mathcal{F}_\p(I)_{(d,k)}$.
	
	Note that each module $(I^k:\p)/I^k$ is a graded $S$-module. Thus, we can write
	$$
	\Soc_\p(I)\ =\ \bigoplus_{d,k\ge0}\Soc_\p(I)_{(d,k)}
	$$
	where $\Soc_\p(I)_{(d,k)}=((I^k:\p)/I^k)_d$. Hence, $\Soc_\p(I)$ is a bigraded $\mathcal{F}_\p(I)$-module, because $\mathcal{F}_\p(I)_{(d_1,\ell)}\Soc_\p(I)_{(d_2,k)}\subseteq\Soc_\p(I)_{(d_1+d_2,k+\ell)}$.\medskip
	
	Therefore, we have proved that
	\begin{Corollary}
		Let $I$ be a graded ideal of $S=K[x_1,\dots,x_n]$ with $K$ a field and let $\p\in\Ass^\infty(I)$. Then, $\Soc_\p(I)$ is a finitely generated bigraded $\mathcal{F}_\p(I)$-module.
	\end{Corollary}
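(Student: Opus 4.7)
The plan is to bootstrap the single-graded finite generation established in Theorem \ref{Thm:Soc_p(I)} to the bigraded setting, simply by decomposing a finite set of $k$-homogeneous generators into their $S$-graded components. The bigraded compatibility
$$
\mathcal{F}_\p(I)_{(d_1,\ell)}\cdot\Soc_\p(I)_{(d_2,k)}\ \subseteq\ \Soc_\p(I)_{(d_1+d_2,k+\ell)}
$$
is already noted in the discussion preceding the corollary, so the bigraded module structure itself is not in question; only finite generation by \emph{bihomogeneous} elements needs justification.

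First I would invoke Theorem \ref{Thm:Soc_p(I)} to obtain a finite set $\{f_1,\dots,f_m\}$ of generators of $\Soc_\p(I)$ as an $\mathcal{F}_\p(I)$-module, where each $f_i$ is homogeneous with respect to the grading indexed by $k$, say $f_i\in\Soc_\p(I)_{k_i}=(I^{k_i}:\p)/I^{k_i}$. Next I would use that $(I^{k_i}:\p)$ is a graded ideal of the Noetherian polynomial ring $S$ (being the colon of two graded ideals), so the quotient $(I^{k_i}:\p)/I^{k_i}$ is a finitely generated graded $S$-module. In particular, each $f_i$ decomposes as a finite sum of homogeneous components
$$
f_i\ =\ \sum_{j}f_{i,j},\qquad f_{i,j}\in\Soc_\p(I)_{(d_{i,j},k_i)}.
$$

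Finally I would verify that the finite collection $\{f_{i,j}\}_{i,j}$ generates $\Soc_\p(I)$ as an $\mathcal{F}_\p(I)$-module. Any element $g\in\Soc_\p(I)$ can be written as $g=\sum_i h_i f_i$ with $h_i\in\mathcal{F}_\p(I)$; decomposing each $h_i$ into bihomogeneous pieces (which is possible because $\mathcal{F}_\p(I)$ is bigraded, with $\mathcal{F}_\p(I)_{(d,k)}=(I^k/\p I^k)_d$) and using the bigraded compatibility displayed above, one obtains an expression of $g$ as an $\mathcal{F}_\p(I)$-linear combination of the $f_{i,j}$.

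There is no real obstacle here; the corollary is essentially a formal consequence of Theorem \ref{Thm:Soc_p(I)} together with the fact that each graded component $\Soc_\p(I)_k$ is a finitely generated graded $S$-module. The only point that requires a moment's care is verifying that the bigrading is honest, \emph{i.e.}, that the products of bihomogeneous elements land in the expected bidegree; but this is exactly the compatibility observed in the paragraph preceding the statement.
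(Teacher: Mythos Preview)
Your proposal is correct and follows essentially the same approach as the paper. The paper treats the corollary as an immediate consequence of Theorem~\ref{Thm:Soc_p(I)} together with the bigrading compatibility $\mathcal{F}_\p(I)_{(d_1,\ell)}\Soc_\p(I)_{(d_2,k)}\subseteq\Soc_\p(I)_{(d_1+d_2,k+\ell)}$ observed just before the statement, writing only ``Therefore, we have proved that''; you simply make explicit the standard step (decomposing $k$-homogeneous generators into bihomogeneous pieces) that the paper leaves tacit.
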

	
	Let $u_1,\dots,u_m$ be a minimal system of homogeneous generators of $I\subset S$. It is well--known that the associated graded ring $\gr_I(S)$ has a presentation
	$$
	\varphi:T=K[x_1,\dots,x_n,y_1,\dots,y_m]\rightarrow\gr_I(S)
	$$
	defined by setting
	\begin{align*}
		\varphi(x_i)=x_i+I\in\gr_I(S)_0=S/I,\ \ &\text{for} \ \ 1\le i\le n,\\
	\varphi(y_i)=u_i+I^2\in\gr_I(S)_1=I/I^2,\ \ &\text{for}\ \ 1\le i\le m.
	\end{align*}
	
	Since $I$ is graded, $\gr_I(S)$ is naturally bigraded, with $\gr_I(S)_{(d,k)}=(I^k/I^{k+1})_d$. Moreover, $T$ can be made into a bigraded ring by setting $\bideg(x_i)=(1,0)$ for $1\le i\le n$, and $\bideg(y_i)=(\deg(u_i),1)$ for $1\le i\le m$, where $\deg(u_i)>0$ is the degree of $u_i$ in $S$. With these bigradings, $\varphi$ is a bigraded surjective ring homomorphism.
	
	In the proof of Theorem \ref{Thm:Soc_p(I)} we have seen that $\mathcal{F}_\p(I)=\gr_I(S)/\p\gr_I(S)$. Let $\pi:\gr_I(S)\rightarrow\mathcal{F}_\p(I)$ be the canonical epimorphism. Then, the composition map 
	\begin{equation}\label{eq:psi-PresentationMap}
		\psi=\pi\circ\varphi\ :\ T\rightarrow\mathcal{F}_\p(I)
	\end{equation}
	is a surjective ring homomorphism. It is clear that $\psi$ preserves the bigraded structure. Thus, $\Soc_\p(I)$ has also the structure of a bigraded $T$-module, if we set
	$$
	af=\psi(a)f \ \ \ \text{for all}\ \ a\in T\ \ \text{and all}\ \ f\in\Soc_\p(I).
	$$
	Since $\psi$ is surjective and $\Soc_\p(I)$ is a finitely generated $\mathcal{F}_\p(I)$-module, it follows that $\Soc_\p(I)$ is a finitely generated $T$-module, as well.
	
	The following lemma is required. For a bigraded $T$-module $M=\bigoplus_{d,k}M_{d,k}$, we set $M_{(*,k)}=\bigoplus_d M_{(d,k)}$. Note that $M_{(*,k)}$ becomes a graded $S$-module.
	\begin{Lemma}
		Let $T=K[x_1,\dots,x_n,y_1,\dots,y_m]$ be a bigraded polynomial ring, with $K$ a field, $\bideg(x_i)=(1,0)$ for $1\le i\le n$ and $\bideg(y_i)=(d_i,1)$ for $1\le i\le m$. Let $\m=(x_1,\dots,x_n)$ and $S=K[x_1,\dots,x_n]\subset T$. Let $M$ be a finitely generated bigraded $T$-module. Then,
		$$
		\Tor^S_i(S/\mathfrak{m},M_{(*,k)})\cong\Tor^T_i(T/\m,M)_{(*,k)}
		$$
		for all $i$ and $k$.
	\end{Lemma}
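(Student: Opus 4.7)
The plan is to use the Koszul complex on the variables $x_1,\dots,x_n$. Let $K_\bullet = K(x_1,\dots,x_n;S)$ denote the Koszul complex over $S$; since $x_1,\dots,x_n$ is a regular sequence in $S$, $K_\bullet$ is a graded free resolution of $S/\mathfrak{m}$ over $S$. I would then promote this to a resolution over $T$ by observing that $T = S[y_1,\dots,y_m]$ is a free (hence flat) $S$-module, so that the sequence $x_1,\dots,x_n$ is still regular in $T$. Therefore the base change $K_\bullet\otimes_S T = K(x_1,\dots,x_n;T)$ is a bigraded free resolution of $T/\mathfrak{m}T$ over $T$, where the bidegrees on the new Koszul generators are inherited from those of the $x_i$'s.

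Using this resolution to compute $\Tor$, I obtain the chain of identifications
$$
\Tor^T_i(T/\mathfrak{m},M)\ =\ H_i\bigl((K_\bullet\otimes_S T)\otimes_T M\bigr)\ =\ H_i(K_\bullet\otimes_S M),
$$
as bigraded $K$-vector spaces. The essential point is then that each $x_i$ has bidegree $(1,0)$, so multiplication by $x_i$ preserves the $y$-grading on $M$. Consequently, as a graded $S$-module, $M$ splits as the internal direct sum $M = \bigoplus_{k\ge 0} M_{(*,k)}$, and the Koszul differentials on $K_\bullet\otimes_S M$, which involve only the $x_i$'s, respect this splitting. Hence there is an isomorphism of complexes of graded $S$-modules
$$
K_\bullet\otimes_S M\ =\ \bigoplus_{k\ge 0}\bigl(K_\bullet\otimes_S M_{(*,k)}\bigr),
$$
where the $k$-th summand is precisely the bidegree $(*,k)$ part of the left-hand side.

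Since homology commutes with direct sums, extracting the $(*,k)$ component yields
$$
\Tor^T_i(T/\mathfrak{m},M)_{(*,k)}\ =\ H_i\bigl(K_\bullet\otimes_S M_{(*,k)}\bigr)\ =\ \Tor^S_i(S/\mathfrak{m},M_{(*,k)}),
$$
as required. The main point to secure is the first step—namely, that $K(x_1,\dots,x_n;T)$ really does compute $\Tor^T_\bullet(T/\mathfrak{m},-)$; once this is justified via the flatness of $T$ over $S$ (equivalently, the regularity of the sequence $x_1,\dots,x_n$ in $T$), the remainder of the argument is straightforward bigraded bookkeeping.
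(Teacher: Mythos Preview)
Your proof is correct, and it takes a genuinely different route from the paper's. The paper resolves the \emph{second} argument: it chooses a minimal bigraded free $T$-resolution $\FF$ of $M$, observes that each $(F_j)_{(*,k)}$ is a free $S$-module (because $T_{(*,k)}$ is free over $S$ with basis the $y$-monomials of total $y$-degree $k$), so that $\FF_{(*,k)}$ is a graded free $S$-resolution of $M_{(*,k)}$; then $\Tor^T_i(T/\m,M)_{(*,k)}=H_i(\FF/\m\FF)_{(*,k)}=H_i(\FF_{(*,k)}/\m\FF_{(*,k)})=\Tor^S_i(S/\m,M_{(*,k)})$. You instead resolve the \emph{first} argument via the Koszul complex on $x_1,\dots,x_n$, exploit flatness of $T$ over $S$ to lift this to a $T$-resolution of $T/\m$, and then use that the Koszul differentials live entirely in $y$-degree zero to split the complex $K_\bullet\otimes_S M$ along the $k$-grading. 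Your approach is arguably cleaner: it uses an explicit, short resolution and the splitting is immediate from the bidegrees of the $x_i$, whereas the paper's argument requires the (easy but unremarked) check that the $(*,k)$-strand of a free $T$-module is $S$-free. The paper's approach, on the other hand, has the virtue of being resolution-agnostic and would adapt more directly if one wanted analogous statements for other functors computed from a free resolution of $M$.
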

	\begin{proof}
		Let $\FF:0\rightarrow\cdots\rightarrow F_j\rightarrow\cdots\rightarrow F_1\rightarrow F_0\rightarrow M\rightarrow 0$ be a minimal bigraded $T$-resolution of $M$. Then,
		$$
		\FF_k:0\rightarrow\cdots\rightarrow(F_j)_{(*,k)}\rightarrow\cdots\rightarrow(F_1)_{(*,k)}\rightarrow(F_0)_{(*,k)}\rightarrow M_{(*,k)}\rightarrow 0
		$$
		is a graded (possibly non-minimal) free $S$-resolution of $M_{(*,k)}=\bigoplus_d M_{(d,k)}$. Since $\Tor^T_i(T/\m,M)=H_i(\FF/\m\FF)$ we have that $\Tor^T_i(T/\m,M)_{(*,k)}=H_i(\FF_k/\m\FF_k)$ which in turn is isomorphic to $\Tor^S_i(S/\mathfrak{m},M_{(*,k)})$. The desired conclusion follows.
	\end{proof}
	
	Note that $T/\m=K[y_1,\dots,y_m]$ and that $\Tor^T_0(T/\m,\Soc_\p(I))$ is a finitely generated bigraded $T/\m$-module. Therefore, by the above lemma, we have
	\begin{align*}
		\alpha((I^k:\p)/I^k)&=\alpha(\Tor^S_0(S/\m,(I^k:\p)/I^k))=\alpha(\Tor^S_0(S/\m,\Soc_\p(I)_{(*,k)}))\\&=\alpha(\Tor^T_0(T/\m,\Soc_\p(I))_{(*,k)}).
	\end{align*}
	Similarly, $\omega((I^k:\p)/I^k)=\omega(\Tor^T_0(T/\m,\Soc_\p(I))_{(*,k)})$.\medskip
	
	From this discussion, Theorem \ref{Thm:alpha(Ik:p)/Ik}(b) follows from the next more general statement, which is a variation of \cite[Theorem 3.4]{CHT99}.
	
	\begin{Proposition}\label{Prop:LinearProgramm}
		Let $V=K[y_1,\dots,y_s]$ a polynomial ring, with $\bideg(y_i)=(d_i,1)$, $d_i\ge1$, for $1\le i\le s$ and $K$ a field, and let $M$ be a finitely generated bigraded $V$-module. Then, $\alpha_M(k)=\min\{d:M_{(d,k)}\ne0\}$ and $\omega_M(k)=\max\{d:M_{(d,k)}\ne0\}$ are linear functions in $k$ for $k\gg0$.
	\end{Proposition}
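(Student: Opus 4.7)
The plan is a standard prime-filtration reduction, after which linearity becomes transparent thanks to the hypothesis $\bideg(y_i)=(d_i,1)$. First, since $V$ is Noetherian and $M$ is finitely generated bigraded, there exists a bigraded prime filtration
\[
0=M_0\subset M_1\subset\cdots\subset M_r=M,\qquad M_i/M_{i-1}\cong (V/P_i)(-a_i,-b_i),
\]
for bihomogeneous primes $P_i\subset V$ and bidegree shifts $(a_i,b_i)\in\ZZ^2$. Taking dimensions of bigraded components along this filtration gives
\[
\dim_K M_{(d,k)}=\sum_{i=1}^{r}\dim_K (V/P_i)_{(d-a_i,\,k-b_i)},
\]
so $M_{(d,k)}\ne 0$ iff $(V/P_i)_{(d-a_i,k-b_i)}\ne 0$ for some $i$. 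Consequently,
\[
\omega_M(k)=\max_{i}\bigl(\omega_{V/P_i}(k-b_i)+a_i\bigr),\qquad \alpha_M(k)=\min_{i}\bigl(\alpha_{V/P_i}(k-b_i)+a_i\bigr),
\]
where the max and min range over indices $i$ for which the relevant slice is non-zero.

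Next, I fix a bigraded prime $P\subset V$ and set $I^\ast=\{\,j:y_j\notin P\,\}$. Any bihomogeneous polynomial $f\in V_{(d,k)}$ is a $K$-linear combination of monomials $y^\alpha$ of bidegree $(d,k)$; if $f\notin P$ then some such monomial $y^\alpha\notin P$, and the primality of $P$ then forces $\alpha_j=0$ whenever $y_j\in P$. Hence $(V/P)_{(d,k)}\ne 0$ iff there exists $\alpha$ supported on $I^\ast$ with $\sum_{j\in I^\ast}\alpha_j=k$ and $\sum_{j\in I^\ast}\alpha_j d_j=d$. If $I^\ast=\emptyset$ then $V/P=K$ and the claim is vacuous for $k\ge 1$. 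Otherwise, concentrating all of the mass on an index achieving $\max_{j\in I^\ast}d_j$ (resp.\ $\min$) yields
\[
\omega_{V/P}(k)=k\cdot\max_{j\in I^\ast}d_j,\qquad \alpha_{V/P}(k)=k\cdot\min_{j\in I^\ast}d_j,
\]
both genuinely linear in $k$ for every $k\ge 0$.

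Substituting these expressions back, each of $\omega_M(k)$ and $\alpha_M(k)$ becomes the pointwise maximum, respectively minimum, of finitely many linear functions $s_i k+c_i$ with integer coefficients. On $\NN$ such an extremum is eventually linear: for any pair, compare first the slopes and then, in case of a tie, the intercepts; for $k\gg 0$ a single function dominates, and iterating produces one linear function that realizes the max (or min) for all large $k$.

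The key structural feature making the argument go through is that every variable $y_i$ has second bidegree equal to $1$: this collapses the a priori semigroup-theoretic extremal problem on $\Gamma_{V/P}\subset\NN^2$ into a trivial linear optimization, and avoids the Frobenius-type obstructions that would otherwise produce only quasi-linear behaviour. I expect the only real care required in the write-up to be bookkeeping: ensuring that the prime filtration can indeed be taken bihomogeneous, and properly handling those quotients $V/P_i$ that vanish in every slice $k\ge 1$, as they must not be counted in the max or min but also never contribute to $M$ asymptotically.
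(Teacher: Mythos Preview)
Your argument is correct, and it takes a genuinely different route from the paper's proof. Both proofs begin with a bigraded prime filtration to reduce to cyclic quotients, but from there they diverge. The paper immediately forgets that the $\mathfrak p_j$ are prime, passes to the initial ideal via a Gr\"obner basis argument to replace $V/J$ by $V/\mathrm{in}(J)$ with $J$ monomial, and then carries out an explicit linear-programming computation on the monomial basis of $V/J$, decomposing over maps $f\in L$ and optimizing the functional $v(\mathbf a)$ on the polytopes $C_{k,f}$. You instead exploit the primality of $P$ directly: the observation that a monomial $y^\alpha\notin P$ forces $\alpha$ to be supported on $I^\ast=\{j:y_j\notin P\}$ reduces the non-vanishing of $(V/P)_{(d,k)}$ to a trivial optimization of a linear functional on the simplex $\{\alpha\ge 0:\sum\alpha_j=k\}$, yielding $\alpha_{V/P}(k)=k\cdot\min_{j\in I^\ast}d_j$ and similarly for $\omega$. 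Your approach is shorter and avoids the Gr\"obner detour entirely; the paper's approach, on the other hand, would survive with only minor changes if the filtration quotients were not prime. The bookkeeping points you flag at the end (bihomogeneity of the filtration, discarding factors with $I^\ast=\emptyset$ for $k\gg 0$) are exactly the right ones to address and pose no difficulty.
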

	\begin{proof}
		The claim about the linearity of $\omega_M(k)$ follows from \cite[Theorem 3.4]{CHT99}. The proof of the claim of the linearity of $\alpha_M(k)$ is similar, but we include here all the details for the convenience of the reader. 
		
		For any exact sequence $0\rightarrow M\rightarrow N\rightarrow P\rightarrow 0$ of finitely generated bigraded $V$-modules we have $\alpha_N(k)=\min\{\alpha_{M}(k),\alpha_{P}(k)\}$, for all $k$.
		
		Since $M$ is a finitely generated $V$-module and $V$ is Noetherian, by the bigraded version of \cite[Proposition 3.7]{Ei} there exists a sequence of bigraded $V$-submodules $$0=M_0\subset M_1\subset\cdots\subset M_{i-1}\subset M_i=M$$ of $M$ such that $M_j/M_{j-1}\cong V/\p_j$, with $\p_j$ a bigraded prime ideal of $V$, for all $1\le j\le i$. Hence, we may suppose that $M=V/J$ with $J$ a bigraded ideal of $V$. We show that $J$ can be replaced by a monomial ideal. For this aim, let $>$ be a monomial order on $V$, and let $\textup{in}(J)$ be the initial ideal of $J$ with respect to $>$. The natural $K$-basis of $V/J$ consists of all residue classes (modulo $J$) of all monomials not belonging to $\textup{in}(J)$, see \cite[Proposition 2.2.5.(a)]{JT}. The same residue classes modulo $\textup{in}(J)$ form a $K$-basis for $V/\textup{in}(J)$. Thus $\alpha_{M}(k)=\alpha_{V/J}(k)=\alpha_{V/\textup{in}(J)}(k)$, and we can assume that $M=V/J$ with $J$ a monomial ideal of $V$.
		
		Recall that $\bideg(y_{i})=(d_i,1)$ for $1\le i\le s$. For later use, we may suppose that $d_{1}\le d_{2}\le\cdots\le d_{s}$. Furthermore, we can assume that $J$ is minimally generated by the monomials ${\bf y}^{{\bf c}_i}=y_{1}^{c_{i,1}}y_{2}^{c_{i,2}}\cdots y_{s}^{c_{i,s}}$, for $1\le i\le r$.
		
		Let ${\bf a}=(a_{1},a_{2},\dots,a_{s})\in\NN^{s}$, we denote by $\overline{{\bf y^a}}$ the residue class of ${\bf y^a}=y_{1}^{a_{1}}y_{2}^{a_{2}}\cdots y_{s}^{a_{s}}$ in $V/J$. Let $k\ge0$, by $B_{k}$ we denote the minimal basis of $(V/J)_{k}$. Then, we can write $\alpha_{M}(k)=\min\{v({\bf a}):\overline{{\bf y^a}}\in B_k\}$, where
		$v({\bf a})=\sum_{i=1}^sa_{i}\deg(y_{i})=\sum_{i=1}^sa_{i}d_{i}.$
		
		Clearly, $\overline{{\bf y^a}}\in B_k$ if and only if $\sum_{j=1}^sa_j=k$, and for all $i=1,\dots,s$, there exists $j$ such that $a_{j}<c_{i,j}$. Denote by $L$ the set of all maps $\{1,\dots,r\}\rightarrow\{1,\dots,s\}$. We can decompose the set $B_k$ as the union $\bigcup_{f\in L}B_{k,f}$, where
		$$
		B_{k,f}=\big\{\overline{{\bf y^a}}\ :\ \sum_{j=1}^sa_{j}=k\ \text{and}\ a_{f(i)}<c_{i,f(i)},\ i=1,\dots,r \big\}.
		$$
		With this in mind, we can write $\alpha_M(k)=\min_{f\in L}\alpha_{f}(k)$, where $\alpha_f(k)$ is defined as $\alpha_{f}(k)=\min\{v({\bf a}):\overline{{\bf y^a}}\in B_{k,f}\}$. Hence, it is enough to prove that $\alpha_f(k)$ is a linear function with integer coefficients for all $f\in L$ and all $k\gg0$.
		
		Fix $f\in L$. Let $\{j_1<j_2<\dots<j_t\}$ be the image of $f$. For $h=1,\dots,t$, we set $c_{j_h}=\min\{c_{i,j_h}:i=1,\dots,r\}-1$. Then, we have that
		$$
		B_{k,f}=\big\{\overline{{\bf y^a}}\ :\ \sum_{j=1}^sa_{j}=k\ \text{and}\ a_{j_h}\le c_{j_h},\ h=1,\dots,t\big\}.
		$$
		Thus, $\alpha_f(k)$ is given by the maximum of the \textit{functional} $v({\bf a})$ on the set
		$$
		C_{k,f}=\big\{{\bf a}\ :\ \sum_{j=1}^sa_{j}=k\ \text{and}\ a_{j_h}\le c_{j_h},\ h=1,\dots,t\big\}.
		$$
		
		Let $\ell$ be the smallest integer such that $j_1=1$, $\dots$, $j_{\ell}=\ell$ and $j_{\ell+1}>\ell+1$. Thus, for ${\bf a}=(a_{1},a_{2},\dots,a_{s})\in C_{k,f}$ we have $a_{1}<c_{j_1},\ a_{2}<c_{j_2},\ \dots,\ a_{\ell}<c_{j_{\ell}}$ and no bound on $a_{j_{\ell+1}}$, except that $\sum_{j=1}^sa_j=k$. We distinguish the two possible cases.\medskip\\
		\textsc{Case 1}. Suppose ${\ell}=s$. Then $\sum_{j=1}^s a_{j}$ can be at most $c_{j_1}+c_{j_2}+\dots+c_{j_\ell}$. Thus, for all $k\gg0$, $B_{k,f}=\emptyset$ and so $\alpha_{f}(k)=0$.\medskip\\
		\textsc{Case 2}. Suppose $\ell<s$. We let $k$ such that $k\ge c_{j_1}+c_{j_2}+\dots+c_{j_{\ell}}$. We claim that the functional $v$ has its minimal value for ${\bf a}_*=(c_{j_1},c_{j_2},\dots,c_{j_{\ell}},k-\sum_{p=1}^{\ell}c_{j_p},0,0,\dots,0)$.
		Then, for all large $k\gg0$, we have that
		$$
		\alpha_f(k)=v({\bf a}_*)= \sum_{p=1}^{\ell}c_{j_{p}}d_{j_{p}}+d_{j_{\ell+1}}(k-\sum_{p=1}^{\ell}c_{j_p}),
		$$
		which is a linear function in $k$ with integer coefficients, as desired.
		
		Let ${\bf a}=(a_{1},a_{2},\dots,a_{s})\in C_{k,f}$. Assume that for some $1\le i<j\le s$ we have $a_{i}<c_i$ and $a_{j}>0$. Then, ${\bf a}'=(a_{1},a_{2}\dots,a_{i}+1,\dots,a_{j}-1,\dots,a_{s})$
		also belongs to $C_{k,f}$ and $v({\bf a}')\le v({\bf a})$ because we have $d_{i}\le d_{j}$. Thus, we see that the minimal value of $v$ on $C_{k,f}$ is achieved when we fill up the first ``boxes" of ${\bf a}\in C_{k,f}$ as much as possible. Finally, the functional $v$ reaches its minimal value when ${\bf a}={\bf a}_*$. 
	\end{proof}\smallskip
		
		Theorem \ref{Thm:GRVtheorem}(c) combined with Theorem \ref{Thm:alpha(Ik:p)/Ik}(b) yields
		\begin{Corollary}\label{Cor:linpMax}
			Let $I\subset S=K[x_1,\dots,x_n]$ be a graded ideal and let $\p\in\Max^\infty(I)$. Then, $\v_\p(I^k)$ is a linear function in $k$, for $k\gg0$.
		\end{Corollary}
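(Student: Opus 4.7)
The proof plan is essentially to string together the two ingredients the authors have just developed: the formula of Theorem \ref{Thm:GRVtheorem}(c) specialising $\v_\p(I)$ to $\alpha((I:\p)/I)$ for maximal associated primes, and the linearity statement of Theorem \ref{Thm:alpha(Ik:p)/Ik}(b) for the initial degree function $k\mapsto\alpha((I^k:\p)/I^k)$.

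First I would verify the following bridge between $\Max^\infty(I)$ and $\Max(I^k)$: for every $k$ sufficiently large one has $\Max(I^k)=\Max^\infty(I)$. Indeed, by Brodmann's theorem there exists $k_0$ with $\Ass(I^k)=\Ass^\infty(I)$ for all $k\ge k_0$, and the subset of elements of $\Ass^\infty(I)$ that are maximal with respect to inclusion is by definition $\Max^\infty(I)$. Hence for all such $k$ the primes maximal in $\Ass(I^k)$ coincide with $\Max^\infty(I)$.

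Fix now $\p\in\Max^\infty(I)$. For every $k\ge k_0$ we have $\p\in\Max(I^k)$, so Theorem \ref{Thm:GRVtheorem}(c) applied to the ideal $I^k$ gives the equality
\[
\v_\p(I^k)\ =\ \alpha\bigl((I^k:\p)/I^k\bigr).
\]
By Theorem \ref{Thm:alpha(Ik:p)/Ik}(b), the right-hand side is a linear function in $k$ for $k\gg0$. Combining these two facts yields the desired linearity of $\v_\p(I^k)$ for $k\gg0$.

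There is essentially no obstacle here, since all the deep work has already been done: the hard part was Theorem \ref{Thm:Soc_p(I)} together with the linear-programming argument of Proposition \ref{Prop:LinearProgramm}, which delivers the linearity of $\alpha((I^k:\p)/I^k)$. The only thing to be careful about is the (trivial) transition from $\Max^\infty(I)$ to $\Max(I^k)$ for large $k$, which ensures that Theorem \ref{Thm:GRVtheorem}(c) may actually be invoked and that the inequality $\v_\p(I^k)\ge\alpha((I^k:\p)/I^k)$ from Theorem \ref{Thm:alpha(Ik:p)/Ik}(a) becomes an equality.
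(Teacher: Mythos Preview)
Your proposal is correct and follows exactly the approach indicated in the paper, which simply states that the corollary is obtained by combining Theorem \ref{Thm:GRVtheorem}(c) with Theorem \ref{Thm:alpha(Ik:p)/Ik}(b). Your explicit verification that $\p\in\Max^\infty(I)$ implies $\p\in\Max(I^k)$ for all $k\gg0$ via Brodmann's theorem is the only bridging step needed, and you have spelled it out clearly.
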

		
		We conclude this section with the following result that gives an estimate on the growth of the modules $(I^k:\p)/I^k$ for large $k$.
		\begin{Proposition}\label{Prop:ItakenOut}
			Let $I\subset S$ be a graded ideal, and let $\p\in\Ass^\infty(I)$. Then,
			$$
			(I^k:\p)/I^k\ =\ (I(I^{k-1}:\p))/I^k, \ \ \textit{for all}\ k\gg0.
			$$
			In particular, $(I^k:\p)\ =\ I(I^{k-1}:\p)$, for all $k\gg0$.
		\end{Proposition}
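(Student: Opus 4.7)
The plan is to use Theorem \ref{Thm:Soc_p(I)} in conjunction with the observation that the associated graded ring $\gr_I(S)$ is standard graded in the power-grading. Recall from the proof of Theorem \ref{Thm:Soc_p(I)} that the ideal $J = (0:_{\gr_I(S)}\p)$ of $\gr_I(S)$ satisfies
$$
J_k\ =\ \big((I^{k+1}:\p) \cap I^k\big)/I^{k+1}\ =\ (I^{k+1}:\p)/I^{k+1}
$$
for all $k \ge k^*$, where $k^*$ is large enough that Ratliff's identity $(I^{k+1}:I) = I^k$ holds and $\p \supseteq I$. Since $\gr_I(S)$ is Noetherian, $J$ is a finitely generated graded $\gr_I(S)$-module.

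The next step is to note that $\gr_I(S) = \bigoplus_{k \ge 0}(I^k/I^{k+1})$, considered with only the power-grading, is generated as a $\gr_I(S)_0$-algebra by its degree-one component $\gr_I(S)_1 = I/I^2$. Consequently, for any finitely generated graded $\gr_I(S)$-module $M$ with module generators of power-degree at most some $N$, one has $M_k = \gr_I(S)_1 \cdot M_{k-1}$ for every $k > N$: any homogeneous $m \in M_k$ admits an expression $m = \sum_i r_i g_i$ with $g_i$ a generator in degree $d_i \le N$ and $r_i \in \gr_I(S)_{k-d_i}$, and since $k - d_i \ge 1$ we may split off one factor from $\gr_I(S)_1$ out of each $r_i$.

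Applying this to $J$, I get $J_k = \gr_I(S)_1 \cdot J_{k-1}$ for all $k \gg 0$. Translating this identity through the isomorphisms above yields
$$
(I^{k+1}:\p)/I^{k+1}\ =\ (I/I^2)\cdot (I^k:\p)/I^k\ =\ \big(I(I^k:\p)\big)/I^{k+1}.
$$
Because $I(I^k:\p) \supseteq I \cdot I^k = I^{k+1}$, both numerators contain $I^{k+1}$, so this equality of quotients lifts to $(I^{k+1}:\p) = I(I^k:\p)$. Reindexing by $k \mapsto k-1$ and reducing modulo $I^k$ gives both assertions of the proposition.

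The only delicate point is the standard-graded structure of $\gr_I(S)$ in the power-grading, but this is immediate from the very definition $\gr_I(S) = \bigoplus_{k \ge 0}(I^k/I^{k+1})$; the internal $S$-grading plays no role and may be ignored throughout.
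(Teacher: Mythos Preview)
Your proof is correct. Both your argument and the paper's rest on the same two facts: the finite generation of $J=(0:_{\gr_I(S)}\p)$ established in Theorem~\ref{Thm:Soc_p(I)}, and the observation that the ambient ring is generated in power-degree one, so that for any finitely generated graded module $M$ one has $M_k=(\text{degree-one part})\cdot M_{k-1}$ once $k$ exceeds the degrees of the generators.

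The difference is packaging. The paper routes this through the bigraded presentation $\psi:T=K[x_1,\dots,x_n,y_1,\dots,y_m]\to\mathcal{F}_\p(I)$ and invokes the auxiliary Proposition~\ref{Prop:M{(*),k}}, which proves the identity $M_{(*,k)}=\{y_1,\dots,y_m\}M_{(*,k-1)}$ for an arbitrary finitely generated bigraded $T$-module; it then translates back via $\psi(y_i)=u_i+\p I$. You work directly with $J$ as a graded $\gr_I(S)$-module in the power-grading alone, use that $\gr_I(S)$ is standard graded over $S/I$, and read off $J_k=\gr_I(S)_1\cdot J_{k-1}$ immediately. Your route is shorter and more self-contained for this particular proposition, since it bypasses the polynomial ring $T$ and the separate Proposition~\ref{Prop:M{(*),k}}; the paper's route has the advantage that Proposition~\ref{Prop:M{(*),k}} is stated in a form reusable for other bigraded modules appearing elsewhere in the paper.
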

	
	In order to prove the above result, we need the next general property.
	\begin{Proposition}\label{Prop:M{(*),k}}
		Let $T=K[x_1,\dots,x_n,y_1,\dots,y_m]$ be a bigraded polynomial ring, with $K$ a field, $\bideg(x_i)=(1,0)$ for $1\le i\le n$ and $\bideg(y_i)=(d_i,1)$, $d_i\ge1$, for $1\le i\le m$. Let $M$ be a finitely generated bigraded $T$-module. Then
		$$
		M_{(*,k)}\ =\ \{y_1,\dots,y_m\}M_{(*,k-1)},\ \ \textit{for all}\ k\gg0.
		$$
		Here $\{y_1,\dots,y_m\}M_{(*,k-1)}$ denotes the set $\{\sum_{i=1}^my_if_i:f_i\in M_{(*,k-1)}\}$.
	\end{Proposition}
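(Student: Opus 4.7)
The plan is to use the existence of a finite bihomogeneous generating set of $M$ and exploit the fact that only the variables $y_1,\dots,y_m$ contribute to the second component of the bidegree. Let $\{g_1,\dots,g_r\}$ be a minimal bigraded system of generators of $M$ with $\bideg(g_j)=(a_j,b_j)$, and set $k^*=\max_{1\le j\le r}b_j$. I will show that the claimed equality holds for every $k\ge k^*+1$.

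First I would verify the easy inclusion: for any $f\in M_{(*,k-1)}$ and any $i$, the element $y_i f$ lies in $M_{(*,k)}$ since $\bideg(y_i)=(d_i,1)$ shifts the second component up by $1$. This gives $\{y_1,\dots,y_m\}M_{(*,k-1)}\subseteq M_{(*,k)}$ for every $k$.

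For the reverse inclusion, fix $k\ge k^*+1$ and pick a nonzero bihomogeneous element $m\in M_{(*,k)}$. Writing $m=\sum_{j=1}^r h_j g_j$ with $h_j\in T$ bihomogeneous of bidegree $(*,k-b_j)$, the key observation is that $k-b_j\ge k-k^*\ge 1$, so every monomial appearing in $h_j$ must contain at least one factor $y_{i(j)}$ (since $\bideg(x_i)=(1,0)$ contributes zero to the second component). After expanding each $h_j$ into a sum of monomials and factoring out one such $y_{i(j)}$ from each, the element $m$ becomes a $K$-linear combination of terms of the form $y_i\cdot(v\, g_j)$ where $v\in T$ is bihomogeneous of second degree $k-b_j-1$, so $v\,g_j\in M_{(*,k-1)}$. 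Grouping by $i$ shows $m\in\{y_1,\dots,y_m\}M_{(*,k-1)}$.

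There is no real obstacle: the proposition is a clean statement about finitely generated bigraded modules over a polynomial ring in which one set of variables carries second degree zero. The only subtle point is to keep the bookkeeping honest when factoring out a single $y_{i(j)}$ from a monomial that may involve several of the $y_i$, but this is purely combinatorial and is handled by expanding $h_j$ in the monomial basis before factoring. Once both inclusions are established for $k\ge k^*+1$, the proposition is proved with an explicit bound on how large $k$ must be.
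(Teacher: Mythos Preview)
Your proposal is correct and follows essentially the same argument as the paper: both choose bihomogeneous generators $g_j$ with $\bideg(g_j)=(a_j,b_j)$, set the threshold at $k>\max_j b_j$, note the trivial inclusion, and for the reverse inclusion expand each coefficient $h_j$ in the monomial basis and factor out a single $y_i$ from each monomial (which is possible precisely because $k-b_j\ge 1$). The paper's proof differs only in notation, writing out the monomial decomposition explicitly as $\sum_{{\bf a},{\bf b}} k_{{\bf a},{\bf b},i}{\bf x^a y^b}$.
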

	\begin{proof}
		Since $M$ is finitely generated, there exists bihomogeneous elements $g_1,\dots,g_r$ that generate $M$ as a bigraded $T$-module. Let $\bideg(g_i)=(p_i,q_i)$ for $1\le i\le r$.
		
		For vectors ${\bf a}=(a_1,\dots,a_n)\in\ZZ_{\ge0}^n$ and ${\bf b}=(b_1,\dots,b_m)\in\ZZ_{\ge0}^m$, we set
		$$
		{\bf x^a y^b}\ =\ x_{1}^{a_1}\cdots x_n^{a_n}y_1^{b_1}\cdots y_m^{b_m}.
		$$
		Then, these elements form a $K$-basis of $T$, and $\bideg({\bf x^ay^b})=(|{\bf a}|+\sum_{i=1}^mb_id_i,|{\bf b}|)$, where $|{\bf c}|=c_1+\dots+c_t$ is the modulus of ${\bf c}$, if ${\bf c}=(c_1,\dots,c_t)\in\ZZ_{\ge0}^t$ . Hence, any bihomogeneous element $f\in M$ with $\bideg(f)=(d,k)$ can be written as
		\begin{align}\label{eq:f=sum}
			f\ =\ \sum_{i=1}^r(\sum_{{\bf a},{\bf b}}k_{{\bf a},{\bf b},i}{\bf x}^{{\bf a}}{\bf y}^{{\bf b}})g_i,
		\end{align}
		where the sum is taken over all ${\bf a}=(a_{1},\dots,a_{n})\in\ZZ_{\ge0}^n$, ${\bf b}=(b_{1},\dots,b_{m})\in\ZZ_{\ge0}^m$ such that $\bideg({\bf x^ay^b})=(|{\bf a}|+\sum_{j=1}^mb_{j}d_j,|{\bf b}|)=(d-p_i,k-q_i)\in\ZZ_{\ge0}^2$, and $k_{{\bf a},{\bf b},i}\in K$.
		
		Now, let $k>\max\{q_1,\dots,q_r\}$ be an integer. It is clear that $M_{(*,k)}$ contains $\{y_1,\dots,y_m\}M_{(*,k-1)}$. For the reverse inclusion, take $f\in M_{(*,k)}$ bihomogeneous of degree $(d,k)$. Writing $f$ as in (\ref{eq:f=sum}), since $k>q_i$ for all $i$, we see that for all ${\bf a},{\bf b}$ such that the corresponding summand $k_{{\bf a},{\bf b},i}{\bf x^a y^b}$ in (\ref{eq:f=sum}) is non-zero, we have $|{\bf b}|=k-q_i>0$ for all $i$. Hence, ${\bf y}^{{\bf b}}=y_{j_{{\bf a,b},i}}{\bf y}^{{\bf b}-{\bf e}_{j_{{\bf a,b},i}}}$, for some $1\le j_{{\bf a,b},i}\le m$. Here ${\bf e}_{\ell}$ is the vector with all entries zero except for the $\ell$th one which is equal to one.
		
		Therefore, we can write
		$$
		f\ =\ \sum_{i=1}^r(\sum_{{\bf a},{\bf b}}k_{{\bf a},{\bf b},i}{\bf x^ay^b})g_i\ =\ \sum_{i=1}^r\sum_{{\bf a},{\bf b}}y_{j_{{\bf a,b},i}}(k_{{\bf a},{\bf b},i}{\bf x}^{{\bf a}}{\bf y}^{{\bf b}-{\bf e}_{j_{{\bf a,b},i}}}g_i).
		$$
		Since each element $k_{{\bf a},{\bf b},i}{\bf x}^{{\bf a}}{\bf y}^{{\bf b}-{\bf e}_{j_{{\bf a,b},i}}}g_i$ has bidegree $(d-d_{j_{{\bf a,b},i}},k-1)$, from the above equation we see that $f\in\{y_1,\dots,y_m\}M_{(*,k-1)}$. Hence, $M_{(*,k)}\subseteq\{y_1,\dots,y_m\}M_{(*,k-1)}$ and the proof is complete.
	\end{proof}
	
	We are now ready for the proof of the proposition.
	\begin{proof}[Proof of Proposition \ref{Prop:ItakenOut}]
		Taking into account equation (\ref{eq:psi-PresentationMap}) and that $\Soc_\p(I)$ is a finitely generated bigraded $T$-module, by applying Proposition \ref{Prop:M{(*),k}} we obtain that
		$$
		\Soc_\p(I)_{(*,k)}\ =\ \{y_1,\dots,y_m\}\Soc_\p(I)_{(*,k-1)},\ \ \textup{for all}\ k\gg0.
		$$
		Note that $\Soc_\p(I)_{(*,k)}=\bigoplus_{d\ge0}((I^k:\p)/I^k)_d=(I^k:\p)/I^k$, for all $k$. Hence,
		$$
		(I^k:\p)/I^k\ =\ \{y_1,\dots,y_m\}(I^{k-1}:\p)/I^{k-1},\ \ \textup{for all}\ k\gg0.
		$$
		Now, since $\mathcal{F}_\p(I)_{(*,1)}\Soc_\p(I)_{(*,k-1)}\subseteq\Soc_\p(I)_{(*,k)}$ and $\psi(y_i)=u_i+\p I\in\mathcal{F}_\p(I)_{(*,1)}$ for all $1\le i\le m$, we obtain that
		$$
		(I^k:\p)/I^k\ =\ (I(I^{k-1}:\p))/I^{k},\ \ \textup{for all}\ k\gg0.
		$$
		Finally, lifting this equation to $S$ yields $(I^k:\p)=I(I^{k-1}:\p)$, for all $k\gg0$.
	\end{proof}

    With a similar argument, we can show the next analogue of Proposition \ref{Prop:ItakenOut}. This result complements \cite[Corollary 4.2]{R1976} due to Ratliff. To the best of our knowledge, it was previously unknown.
    \begin{Corollary}\label{Cor:ItakenOut-Ratliff}
    	Let $R$ be a commutative Noetherian domain, $I\subset R$ be an ideal, and $\p\in\Ass^\infty(I)$ be a stable prime of $I$. Then,
    	$$
    	(I^k:\p)\ =\ I(I^{k-1}:\p), \ \ \textit{for all}\ k\gg0.
    	$$
    \end{Corollary}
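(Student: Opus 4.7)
The plan is to recycle the strategy of Proposition \ref{Prop:ItakenOut}, using only the single $k$-grading, since in the general Noetherian domain setting we no longer have a polynomial-ring bigrading at our disposal. The starting point is that Theorem \ref{Thm:Soc_p(I)} was already stated and proved in the generality of a Noetherian commutative domain $R$: thus $\Soc_\p(I) = \bigoplus_{k\ge 0}(I^k:\p)/I^k$ is a finitely generated graded module over the Noetherian graded ring $\F_\p(I) = \bigoplus_{k\ge 0}(I^k/\p I^k)$.

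Next, I would observe that $\F_\p(I)$ is generated as an $\F_\p(I)_0$-algebra by its degree-one component $\F_\p(I)_1 = I/\p I$. This is immediate from $I^k = I\cdot I^{k-1}$, which after passing to the quotient yields $\F_\p(I)_k = \F_\p(I)_1\cdot \F_\p(I)_{k-1}$ for every $k\ge 1$. Consequently, the standard fact that for a graded ring $A = A_0[A_1]$ and any finitely generated graded $A$-module $M$ one has $M_k = A_1 M_{k-1}$ for all $k\gg 0$ applies to $M=\Soc_\p(I)$ over $A=\F_\p(I)$. This is the unigraded analogue of Proposition \ref{Prop:M{(*),k}}, and its proof is elementary: fix finitely many homogeneous generators $g_1,\dots,g_r$ of $M$ of degrees $d_1,\dots,d_r$, and for $k>\max_i d_i$ decompose every element of $M_k$ using $A_{k-d_i}=A_1\cdot A_{k-d_i-1}$.

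Rewriting this conclusion in ambient terms: for $k\gg 0$, the equality $\Soc_\p(I)_k = \F_\p(I)_1\cdot\Soc_\p(I)_{k-1}$ says precisely that every element of $(I^k:\p)/I^k$ is a sum of classes of products $f\cdot g$ with $f\in I$ and $g\in(I^{k-1}:\p)$, so $(I^k:\p)\subseteq I(I^{k-1}:\p)+I^k$. Since $I^{k-1}\subseteq(I^{k-1}:\p)$ we have $I^k\subseteq I(I^{k-1}:\p)$, and hence the sum collapses to $I(I^{k-1}:\p)$. The reverse inclusion $I(I^{k-1}:\p)\subseteq(I^k:\p)$ is immediate from the colon calculus, so $(I^k:\p)=I(I^{k-1}:\p)$ for all $k\gg 0$, as desired.

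The only step requiring a moment of care is the assertion that the proof of Theorem \ref{Thm:Soc_p(I)} goes through verbatim in the Noetherian domain setting. The argument given there invokes Ratliff's theorem $(I^{k+1}:I)=I^k$ for $k\gg 0$, the Noetherianity of $\gr_I(R)$, and the identification $\gr_I(R)/\p\gr_I(R)\cong\F_\p(I)$; none of these uses the polynomial-ring structure of $S$, so this is a straightforward transcription. Accordingly, the real content of the corollary is simply replacing the bigraded Proposition \ref{Prop:M{(*),k}} with its standard unigraded counterpart.
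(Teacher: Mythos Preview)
Your proposal is correct and follows essentially the same approach the paper indicates: the paper merely says ``with a similar argument'' to Proposition \ref{Prop:ItakenOut}, and you have spelled out exactly what that means---replace the bigraded Proposition \ref{Prop:M{(*),k}} by its standard unigraded analogue $M_k=A_1M_{k-1}$ for $k\gg0$ over a graded ring $A=A_0[A_1]$, which is all that is available (and all that is needed) in the Noetherian-domain setting. The lifting step $(I^k:\p)\subseteq I(I^{k-1}:\p)+I^k=I(I^{k-1}:\p)$ and the trivial reverse inclusion are handled correctly.
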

	
	\section{Asymptotic behaviour of the $\v$-number}\label{sec3:FS2}
	
	In this section, we prove the main result in the article: $\v(I^k)$ is indeed linear function in $k$ for all $k\gg0$, and all graded ideals $I\subset S$.
	
	\begin{Theorem}\label{Thm:v-function-linear}
		Let $I\subset S=K[x_1,\dots,x_n]$ be a graded ideal. Then, $\v_\p(I^k)$, for all for $\p\in\Ass^\infty(I)$, and $\v(I^k)$ are linear functions in $k$, for $k\gg0$.
	\end{Theorem}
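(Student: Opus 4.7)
The plan is to prove linearity of $\v_\p(I^k)$ by constructing a finitely generated bigraded $\mathcal{F}_\p(I)$-module $\Soc_\p^*(I)$, realized as a suitable quotient of $\Soc_\p(I)$, whose $k$-th slice in the second grading has initial degree exactly $\v_\p(I^k)$; linearity will then follow from Proposition~\ref{Prop:LinearProgramm}. The starting observation is that, by Theorem~\ref{Thm:GRVtheorem}(a), $\v_\p(I^k)$ is the minimum degree of a homogeneous element $f\in (I^k:\p)$ with $(I^k:f)=\p$. Since $\p\subseteq (I^k:f)$ is automatic, equality fails precisely when some $g\in S\setminus\p$ satisfies $gf\in I^k$, i.e., when $f$ lies in $L_k:=I^kS_\p\cap S$. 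Hence
\[
\v_\p(I^k)\;=\;\alpha\bigl((I^k:\p)\big/\bigl((I^k:\p)\cap L_k\bigr)\bigr).
\]

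Set $N_k=\bigl((I^k:\p)\cap L_k\bigr)/I^k\subseteq \Soc_\p(I)_k$ and $N=\bigoplus_{k\ge 0} N_k$. The key step is to verify that $N$ is a bigraded $\mathcal{F}_\p(I)$-submodule of $\Soc_\p(I)$: if $f\in (I^k:\p)\cap L_k$ and $g\in I^\ell$, then $(gf)\p\subseteq g\cdot I^k\subseteq I^{k+\ell}$; moreover, since $f\in L_k$, some $h\notin\p$ satisfies $fh\in I^k$, whence $(gf)h\in gI^k\subseteq I^{k+\ell}$ gives $gf\in L_{k+\ell}$. Setting $\Soc_\p^*(I)=\Soc_\p(I)/N$ therefore yields a finitely generated bigraded $\mathcal{F}_\p(I)$-module, which via the surjection $\psi\colon T\to \mathcal{F}_\p(I)$ of~\eqref{eq:psi-PresentationMap} becomes a finitely generated bigraded $T$-module. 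Since $\Soc_\p^*(I)_{(*,k)}\cong (I^k:\p)/((I^k:\p)\cap L_k)$ by construction, the displayed equation reads $\v_\p(I^k)=\alpha(\Soc_\p^*(I)_{(*,k)})$, and applying Proposition~\ref{Prop:LinearProgramm} gives the desired linearity for $k\gg 0$.

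For the global $\v$-function, I combine Theorem~\ref{Thm:GRVtheorem}(b) with Brodmann's theorem to conclude $\v(I^k)=\min\{\v_\p(I^k):\p\in\Ass^\infty(I)\}$ for all $k\gg 0$. Since $\Ass^\infty(I)$ is finite and each $\v_\p(I^k)$ is eventually linear with integer slope and intercept, the minimum is eventually attained by whichever summand $a_\p k+b_\p$ is lexicographically smallest in the pair $(a_\p,b_\p)$, and is therefore itself a linear function of $k$ for $k\gg 0$. \textbf{The hardest part} lies in handling embedded primes: one must both identify the correct submodule $N$ that forces $(I^k:f)$ to equal $\p$ (rather than merely contain it), and verify the stability $I^\ell L_k\subseteq L_{k+\ell}$ so that $N$ is genuinely a bigraded $\mathcal{F}_\p(I)$-submodule; together these reduce the problem to the linear-programming input of Proposition~\ref{Prop:LinearProgramm}.
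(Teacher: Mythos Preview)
Your proposal is correct and reaches the same conclusion as the paper, but via a genuinely different (and arguably cleaner) construction of the quotient module $\Soc_\p^*(I)$.

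The paper defines $\mathcal{N}$ as the $\mathcal{F}_\p(I)$-submodule generated by the ``bad'' minimal generators $B_k=\{\overline{g}_{k,j}:(I^k:g_{k,j})\ne\p\}$ of each $(I^k:\p)/I^k$, and must then prove the nontrivial claim that no ``good'' generator $\overline{g}_{k,j}\in A_k$ lies in $\mathcal{N}$; this is done by a contradiction argument using the rule that if a finite intersection of ideals equals a prime $\p$ then one of them equals $\p$. Your $N=\bigoplus_k\bigl((I^k:\p)\cap L_k\bigr)/I^k$ with $L_k=I^kS_\p\cap S$ is intrinsic (independent of any choice of generators) and in fact contains the paper's $\mathcal{N}$. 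The identification $\v_\p(I^k)=\alpha(\Soc_\p^*(I)_{(*,k)})$ then follows immediately from the equivalence ``$(I^k:f)=\p$ iff $f\in(I^k:\p)\setminus L_k$'', bypassing the paper's intersection argument entirely. What the paper's route buys is that one never needs to invoke localization or primary decomposition; what your route buys is a canonical submodule and a one-line verification that it is closed under the $\mathcal{F}_\p(I)$-action.

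Two small points worth making explicit in a final write-up: first, $L_k$ is a \emph{graded} ideal (e.g.\ because it equals the intersection of those graded primary components of $I^k$ whose radicals are contained in $\p$), so that the quotient $(I^k:\p)/((I^k:\p)\cap L_k)$ is graded and the equality with $\v_\p(I^k)$ is meaningful; second, to invoke Proposition~\ref{Prop:LinearProgramm} one passes, exactly as the paper does before the statement of Theorem~\ref{Thm:v-function-linear}, from $\Soc_\p^*(I)$ to the $K[y_1,\dots,y_m]$-module $\Tor_0^T(T/\m,\Soc_\p^*(I))$, using that $\alpha(P)=\alpha(P/\m P)$ for any finitely generated graded $S$-module $P$.
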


    In order to prove the theorem, we construct a new module starting from $\Soc_\p(I)$.\medskip
    
    Let $I\subset S$ be a graded ideal, and let $\p\in\Ass^\infty(I)$. For all $k$, let $\overline{g}_{k,1},\dots,\overline{g}_{k,r_k}$ be a minimal homogeneous generating set of $\Soc_\p(I)_{(*,k)}=(I^k:\p)/I^k$. We set
    \begin{align*}
    	A_k\ &=\ \{\overline{g}_{k,j}\ :\ (I^k:g_{k,j})=\p\},\\
    	B_k\ &=\ \{\overline{g}_{k,j}\ :\ (I^k:g_{k,j})\ne\p\}.
    \end{align*} 
    
    Let $\mathcal{N}$ be the submodule of $\Soc_\p(I)$ generated by $\bigcup_kB_k$. Since each element of the set $\bigcup_kB_k$ is bihomogeneous, $\mathcal{N}$ is a bigraded submodule of $\Soc_\p(I)$. We define the following $\F_\p(I)$-module:
    $$
    \Soc_\p^*(I)\ =\ \Soc_\p(I)/\mathcal{N}.
    $$
    By Theorem \ref{Thm:Soc_p(I)} and the fact that $\mathcal{N}$ is a bigraded submodule of $\Soc_\p(I)$, it follows that $\Soc^*_\p(I)$ is a finitely generated bigraded $\F_\p(I)$-module.
    
    Moreover,
    $$
    \Soc_\p^*(I)_{(*,k)}\ =\ \dfrac{(I^k:\p)/I^k}{\mathcal{N}_{(*,k)}},
    $$
    for all $k\ge0$.
    
    Finally, we recall the following basic rules. Let $I,I_1,I_2,\{J_i\}_i$ be ideals of a commutative Noetherian ring $R$ and let $\p$ be a prime ideal of $R$. Then,
    \begin{enumerate}
    	\item[(i)] $(I:\sum_i J_i)=\bigcap_i(I:J_i)$,
    	\item[(ii)] $((I:I_1):I_2)=(I:I_1I_2)$,
    	\item[(iii)] if $\p=\bigcap_i J_i$, then $\p=J_i$ for some $i$.
    \end{enumerate}
    \begin{proof}[Proof of Theorem \ref{Thm:v-function-linear}]
    	Let $\p\in\Ass^\infty(I)$. By Theorem \ref{Thm:GRVtheorem}, for all $k\gg0$ we have $$\v_\p(I^k)=\min_{\overline{g}_{k,j}\in A_k}\deg(g_{k,j}).$$
    	
    	Next, note that $A_k\cup B_k$ generates $\Soc_\p(I)_{(*,k)}$ and $B_k\subseteq\mathcal{N}$. It follows that a minimal homogeneous generating set of $\Soc_\p^*(I)_{(*,k)}$ is given by the non-zero residue classes, modulo $\mathcal{N}_{(*,k)}$, of the elements of $A_k$. We claim that each of these classes is non-zero modulo $\mathcal{N}_{(*,k)}$. Suppose for a contradiction that some $\overline{g}_{k,j}\in A_k$ belongs to $\mathcal{N}_{(*,k)}$. Since $\mathcal{N}$ is generated by $\bigcup_\ell B_\ell$, we can write
    	\begin{equation}\label{eq:ginB_k}
    		\overline{g}_{k,j}\ =\ \sum_{\substack{1\le\ell\le k\\ \overline{g}_{\ell,p}\in B_\ell}}\overline{a}_{k-\ell,p}\overline{g}_{\ell,p},
    	\end{equation}
    	with each $\overline{a}_{k-\ell,p}\in\mathcal{F}_\p(I)_{(*,k-\ell)}=I^{k-\ell}/\p I^{k-\ell}$. Since $\overline{g}_{k,j}\in A_k$ we have $(I^k:g_{k,j})=\p$. By equation (\ref{eq:ginB_k}) we can write $g_{k,j}=\sum a_{k-\ell,p}g_{\ell,p}+z$ where $z$ is a suitable element of $I^k$. Note that $\bigcap(I^k:a_{k-\ell,p}g_{\ell,p})$ is contained in $(I^k:g_{k,j})=\p$. Indeed, take $h\in\bigcap(I^k:a_{k-\ell,p}g_{\ell,p})$. Then $ha_{k-\ell,p}g_{\ell,p}\in I^k$ for all terms in the intersection. Hence $hg_{k,j}=\sum ha_{k-\ell,p}g_{\ell,p}+hz\in I^k$. Since $(I^\ell:g_{\ell,p})\subseteq(I^k:a_{k-\ell,p}g_{\ell,p})$ for all $\ell$ and $p$,
    	\begin{align*}
    		\p\ \subseteq\ \bigcap(I^\ell:g_{\ell,p})\ \subseteq\ \bigcap(I^k:a_{k-\ell,p}g_{\ell,p})\ \subseteq\ \p.
    	\end{align*}
    	Hence $\bigcap(I^\ell:g_{\ell,p})=\p$. By rule (iii), we have $(I^\ell:g_{\ell,p})=\p$ for some $\ell$ and $p$. But this is a contradiction because $\overline{g}_{\ell,p}\in B_\ell$. This proves our claim. As a consequence, we obtain that for all $k\gg0$
    	$$
    	\alpha(\Soc_\p^*(I)_{(*,k)})\ =\ \min_{\overline{g}_{k,j}\in A_k}\deg(g_{k,j})\ =\ \v_\p(I^k).
    	$$
    	Using the map (\ref{eq:psi-PresentationMap}), we see that $\Soc_\p^*(I)$ is a finitely generated bigraded $T$-module. Moreover, $\alpha(\Soc_\p^*(I)_{(*,k)})=\alpha(\Tor^T_0(T/\m,\Soc_\p^*(I))_{(*,k)})$. Applying Proposition \ref{Prop:LinearProgramm}, we conclude that $\v_\p(I^k)=\alpha(\Soc_\p^*(I)_{(*,k)})$ is a linear function in $k$ for $k\gg0$.
    	
    	Finally, $\v(I)=\min_{\p\in\Ass^\infty(I)}\v_\p(I^k)$ is a linear function in $k$ for $k\gg0$, for it is given by the minimum of finitely many eventually linear functions in $k$. 
    \end{proof}
 
    We conclude the section with some results which estimate the growth of the functions $\v_\p(I^k)$, when $k\gg0$.
    \begin{Proposition}\label{Prop:vpk+1<k}
    	Let $I\subset S$ be a graded ideal and let $\p\in\Ass^\infty(I)$. Then,
    	$$
    	\v_\p(I^{k-1})+\alpha(I)\ \le\ \v_\p(I^{k})\ \le\ \v_\p(I^{k-1})+\omega(I),\ \ \textit{for all}\ k\gg0.
    	$$
    	In particular,
    	$$
    	\v(I^{k-1})+\alpha(I)\le\v(I^{k})\le\v(I^{k-1})+\omega(I),\ \ \textit{for all}\ k\gg0.
    	$$
    \end{Proposition}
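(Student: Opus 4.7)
The approach is to exploit the module $\Soc_\p^*(I)$ and the identification $\v_\p(I^k) = \alpha(\Soc_\p^*(I)_{(*,k)})$ for $k \gg 0$, established in the proof of Theorem \ref{Thm:v-function-linear}. Recall that $\Soc_\p^*(I)$ is a finitely generated bigraded $T$-module, where $T = K[x_1, \ldots, x_n, y_1, \ldots, y_m]$ with $\bideg(y_i) = (d_i, 1)$ and $d_i = \deg(u_i)$ for the minimal generators of $I$; in particular $\alpha(I) = \min_i d_i$ and $\omega(I) = \max_i d_i$. Proposition \ref{Prop:M{(*),k}} applied to $\Soc_\p^*(I)$ yields, for $k \gg 0$, the generation identity $\Soc_\p^*(I)_{(*,k)} = \{y_1, \ldots, y_m\}\Soc_\p^*(I)_{(*,k-1)}$.

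For the lower bound, I take a bihomogeneous $w \in \Soc_\p^*(I)_{(*,k)}$ of minimum degree $\v_\p(I^k)$ and expand it as $w = \sum_i y_i f_i$ with each $f_i$ bihomogeneous of bidegree $(\v_\p(I^k) - d_i, k-1)$. At least one $f_i$ is non-zero, so $\v_\p(I^k) - d_i \geq \alpha(\Soc_\p^*(I)_{(*,k-1)}) = \v_\p(I^{k-1})$, whence $\v_\p(I^k) \geq \v_\p(I^{k-1}) + d_i \geq \v_\p(I^{k-1}) + \alpha(I)$.

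For the upper bound, I aim to produce, for $k$ large, a minimum-degree element $g \in \Soc_\p^*(I)_{(*,k-1)}$ together with an index $i$ satisfying $y_i g \neq 0$; then $\v_\p(I^k) \leq d_i + \v_\p(I^{k-1}) \leq \omega(I) + \v_\p(I^{k-1})$. To guarantee such an $i$, I consider the bigraded $T$-submodule $N = (0 :_{\Soc_\p^*(I)}(y_1, \ldots, y_m))$. Since $\Soc_\p^*(I)$ is Noetherian over $T$, $N$ is finitely generated, and because $(y_1, \ldots, y_m)$ annihilates it, $N$ becomes a finitely generated bigraded module over $T/(y_1, \ldots, y_m) \cong S$. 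As $S$ is concentrated in bidegrees $(*, 0)$, a finite system of $S$-generators of $N$ has bounded second coordinate, so $N_{(*,k)} = 0$ for $k$ above some threshold $k_1$. For $k - 1 > k_1$, every non-zero element of $\Soc_\p^*(I)_{(*,k-1)}$ lies outside $N$, and the desired $i$ exists for any minimum-degree $g$.

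The bounds for $\v(I^k)$ follow by choosing, for each $k \gg 0$, stable primes $\p_k, \p_{k-1} \in \Ass^\infty(I)$ that realize $\v(I^k) = \v_{\p_k}(I^k)$ and $\v(I^{k-1}) = \v_{\p_{k-1}}(I^{k-1})$, and combining with the local bounds: $\v(I^k) \geq \v_{\p_k}(I^{k-1}) + \alpha(I) \geq \v(I^{k-1}) + \alpha(I)$, and $\v(I^k) \leq \v_{\p_{k-1}}(I^k) \leq \v(I^{k-1}) + \omega(I)$. The principal obstacle is the upper bound, specifically ensuring that a minimum-degree generator at level $k-1$ survives multiplication by some $y_i$; this is settled by the short module-theoretic argument bounding the $k$-support of the annihilator submodule $N$.
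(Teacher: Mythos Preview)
Your argument is correct, but it follows a genuinely different route from the paper's. The paper proves both inequalities by working directly with colon ideals. For the lower bound it invokes Proposition~\ref{Prop:ItakenOut}, which gives $(I^k:\p)=I(I^{k-1}:\p)$ for $k\gg0$: a witness $f$ for $\v_\p(I^k)$ factors as $f=ug$ with $u\in I$ and $g\in(I^{k-1}:\p)$, and a short chain of colon inclusions shows $(I^{k-1}:g)=\p$, yielding $\v_\p(I^k)\ge\deg g+\deg u\ge\v_\p(I^{k-1})+\alpha(I)$. For the upper bound it uses Ratliff's identity $(I^k:I)=I^{k-1}$: if $(I^{k-1}:f)=\p$ with $\deg f=\v_\p(I^{k-1})$, then $\p=(I^k:fI)=\bigcap_i(I^k:fu_i)$, so some $(I^k:fu_i)=\p$, giving $\v_\p(I^k)\le\deg f+\omega(I)$. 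Your approach instead packages everything into the module $\Soc_\p^*(I)$ and the identification $\v_\p(I^k)=\alpha(\Soc_\p^*(I)_{(*,k)})$, then uses Proposition~\ref{Prop:M{(*),k}} for the lower bound and a Noetherian bound on the $(y_1,\dots,y_m)$-torsion for the upper bound. The paper's argument is more elementary and self-contained (it does not rely on the construction of $\Soc_\p^*(I)$ from Theorem~\ref{Thm:v-function-linear}), while yours is more structural and highlights that both inequalities are formal consequences of the finite generation of $\Soc_\p^*(I)$ as a bigraded $T$-module; your annihilator argument for the upper bound is a nice alternative to the colon-ideal computation with Ratliff's identity.
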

    \begin{proof}
    		We first prove that $\v_\p(I^{k-1})+\alpha(I)\le\v_\p(I^{k})$, for $k\gg0$. By Proposition \ref{Prop:ItakenOut}, there exists $k_0>0$ such that $(I^k:\p)/I^k\ =\ (I(I^{k-1}:\p))/I^{k}$, for all $k\ge k_0$. Let $k\ge k_0$. Take $\overline{f}\in(I^k:\p)/I^k$ such that $(I^k:f)=\p$ and $\deg(f)=\v_\p(I^k)$. Since $(I^k:\p)=I(I^{k-1}:\p)$, we can write $f=ug$ where $u\in I$ and $g\in(I^{k-1}:\p)$. Then
    		$$
    		\p\subseteq(I^{k-1}:g)\subseteq(uI^{k-1}:ug)=(uI^{k-1}:f)\subseteq(I^k:f)=\p.
    		$$
    		Here we used that $g\in(I^{k-1}:\p)$ and $uI^{k-1}\subseteq I^k$. Hence, $(I^{k-1}:g)=\p$ and so
    		$$
    		\v_\p(I^k)=\deg(f)=\deg(ug)=\deg(g)+\deg(u)\ge\v_\p(I^{k-1})+\alpha(I).
    		$$
    		
    		Now, we prove the inequality $\v_\p(I^{k})\le\v_\p(I^{k-1})+\omega(I)$, for $k\gg0$. By Brodmann and Ratliff, there exists $k^*>0$ such that $\Ass^\infty(I)=\Ass(I^k)$ and $(I^{k}:I)=I^{k-1}$ for all $k\ge k^*$. Fix $k\ge k^*$ and let $\p\in\Ass^\infty(I)$.
    		
    		Let $f\in S$ be a homogeneous element with $(I^{k-1}:f)=\p$ and $\deg(f)=\v_\p(I^{k-1})$. Let $u_1,\dots,u_m$ be a minimal homogeneous generating set of $I$. By rules (ii) and (i),
    		\begin{align*}
    			\p=(I^{k-1}:f)\ &=\ (I^{k}:I):f=(I^{k}:fI)\\ &=\ (I^{k}:\sum_{i=1}^m(fu_i))=\bigcap_{i=1}^m(I^{k}:fu_i).
    		\end{align*}
    		Hence, by rule (iii), we have $\p=(I^{k}:fu_i)$ for some $i$. By the definition of $\v_\p(I^{k})$, this means that $\v_\p(I^{k})\le\deg(fu_i)=\deg(f)+\deg(u_i)\le\v_\p(I^{k-1})+\omega(I)$.
    	\end{proof}
    
        We have the next nice consequences.
        \begin{Corollary}\label{Cor:NiceConsequences}
        	Let $I\subset S$ be a graded ideal, and let $\p\in\Ass^\infty(I)$. Then,
        	\begin{enumerate}
        		\item[\textup{(a)}] The function $\v_\p(I^k)$ is strictly increasing, for all $k\gg0$.
        		\item[\textup{(b)}] The function $\v(I^k)$ is strictly increasing, for all $k\gg0$.
        		\item[\textup{(c)}] $\omega((I^k:\p)/I^k)\ge\alpha((I^{k-1}:\p)/I^{k-1})+\alpha(I)$, for all $k\gg0$.
        		\item[\textup{(d)}] $\alpha((I^{k}:\p)/I^{k})\le\omega((I^{k-1}:\p)/I^{k-1})+\omega(I)$, for all $k\gg0$.
        	\end{enumerate}
        \end{Corollary}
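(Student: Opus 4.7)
The plan is to derive all four parts as essentially immediate consequences of Proposition \ref{Prop:vpk+1<k}, combined with the sandwich estimate $\alpha((I^k:\p)/I^k)\le\v_\p(I^k)\le\omega((I^k:\p)/I^k)$ from Theorem \ref{Thm:alpha(Ik:p)/Ik}(a). The only arithmetic input needed is that $\alpha(I)\ge 1$, which holds because $I$ is a proper graded ideal (otherwise $\Ass^\infty(I)=\emptyset$ and there is nothing to prove), so $I\subseteq\m$.

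For parts (a) and (b), I would apply the lower estimate $\v_\p(I^{k-1})+\alpha(I)\le\v_\p(I^k)$ of Proposition \ref{Prop:vpk+1<k}, respectively the analogous estimate $\v(I^{k-1})+\alpha(I)\le\v(I^k)$. Since $\alpha(I)\ge 1$, both inequalities strengthen to $\v_\p(I^k)\ge\v_\p(I^{k-1})+1$ and $\v(I^k)\ge\v(I^{k-1})+1$ for all $k\gg 0$, which is exactly strict monotonicity.

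For parts (c) and (d), I would chain three inequalities together. For (c), starting from the upper sandwich bound $\v_\p(I^k)\le\omega((I^k:\p)/I^k)$ and then using $\v_\p(I^k)\ge\v_\p(I^{k-1})+\alpha(I)$ followed by the lower sandwich bound $\v_\p(I^{k-1})\ge\alpha((I^{k-1}:\p)/I^{k-1})$, I obtain
\[
\omega((I^k:\p)/I^k)\ \ge\ \v_\p(I^k)\ \ge\ \v_\p(I^{k-1})+\alpha(I)\ \ge\ \alpha((I^{k-1}:\p)/I^{k-1})+\alpha(I),
\]
valid for $k\gg 0$. Part (d) is handled by the dual chain
\[
\alpha((I^k:\p)/I^k)\ \le\ \v_\p(I^k)\ \le\ \v_\p(I^{k-1})+\omega(I)\ \le\ \omega((I^{k-1}:\p)/I^{k-1})+\omega(I).
\]
I do not expect any real obstacle here, since the substance of the argument has already been invested in Proposition \ref{Prop:vpk+1<k} (via Ratliff's stability $(I^k:I)=I^{k-1}$ and Proposition \ref{Prop:ItakenOut}) and in Theorem \ref{Thm:alpha(Ik:p)/Ik}(a); the corollary is a purely formal unpacking of those two inputs.
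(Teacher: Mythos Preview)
Your proposal is correct and matches the paper's argument almost exactly: parts (a) and (b) come from the lower bound in Proposition~\ref{Prop:vpk+1<k} together with $\alpha(I)\ge1$, and parts (c) and (d) are the chains you wrote, combining the sandwich estimate of Theorem~\ref{Thm:alpha(Ik:p)/Ik}(a) with Proposition~\ref{Prop:vpk+1<k}. The only cosmetic difference is that for (b) the paper deduces strict monotonicity of $\v(I^k)=\min_{\p}\v_\p(I^k)$ from part (a) via Theorem~\ref{Thm:GRVtheorem}(b), rather than invoking the ``In particular'' clause of Proposition~\ref{Prop:vpk+1<k} directly as you do; both routes are equivalent.
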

        \begin{proof}
        	(a) By the previous Proposition \ref{Prop:vpk+1<k}, $\v_\p(I^k)\ge\v_\p(I^{k-1})+\alpha(I)$, for all $k\gg0$. Since $\alpha(I)\ge1$, we get $\v_\p(I^{k})>\v_\p(I^{k-1})$ for $k\gg0$, as wanted. Statement (b) follows from (a) and Theorem \ref{Thm:GRVtheorem}(b). Finally, statements (c) and (d) follow from Theorems \ref{Thm:GRVtheorem}(a) and \ref{Thm:alpha(Ik:p)/Ik}(a) combined with Proposition \ref{Prop:vpk+1<k}.
        \end{proof}
    
        The next elementary lemma will be needed several times in the sequel.
        \begin{Lemma}\label{Lemma:Elementary-a_k}
        	Let $\{a_k\}_{k}$ be a numerical sequence. The following statements hold.
        	\begin{enumerate}
        		\item[\textup{(a)}] Suppose that $a_k\ge a_{k-1}+a$, for all $k\gg0$ and some integer $a$. Then, there exists a constant $c$ such that $a_k\ge ak+c$, for all $k\gg0$.
        		\item[\textup{(b)}] Suppose that $a_k\le a_{k-1}+a$, for all $k\gg0$ and some integer $a$. Then, there exists a constant $c$ such that $a_k\le ak+c$, for all $k\gg0$.
        		\item[\textup{(c)}] Suppose that $a_k=a_{k-1}+a$, for all $k\gg0$ and some integer $a$. Then, there exists a constant $c$ such that $a_k=ak+c$, for all $k\gg0$.
        		\item[\textup{(d)}] Suppose that $ak+b\le a_k\le ak+c$, for all $k\gg0$ and some integers $a,b,c$. Then, $\lim\limits_{k\rightarrow\infty}\frac{a_k}{k}=a$.
        	\end{enumerate}
        \end{Lemma}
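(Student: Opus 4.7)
The plan is to dispatch all four parts by elementary arguments, with the first three being straightforward inductions and the fourth a squeeze-theorem computation.

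For part (a), let $k_0$ be an index such that $a_k \geq a_{k-1} + a$ holds for all $k \geq k_0$. I would induct on $k - k_0$: the base case $k = k_0$ is trivial, and the inductive step uses $a_k \geq a_{k-1} + a \geq a(k-1) + c + a = ak + c$ once we set $c = a_{k_0} - ak_0$. Unrolling the recurrence equivalently gives $a_k \geq a_{k_0} + a(k - k_0)$ for all $k \geq k_0$, which rewrites as $a_k \geq ak + c$ with this same $c$. Part (b) is proved by the identical induction after reversing every inequality, and part (c) follows either by combining (a) and (b) or by the same induction with equalities throughout, yielding $a_k = a_{k_0} + a(k - k_0) = ak + c$ for $k \geq k_0$.

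For part (d), assume the two-sided bound $ak + b \leq a_k \leq ak + c$ holds for all $k \geq k_0$. Dividing by $k > 0$ gives
\[
a + \frac{b}{k} \ \leq\ \frac{a_k}{k}\ \leq\ a + \frac{c}{k}, \qquad k \geq \max\{k_0, 1\}.
\]
Both bounding sequences converge to $a$ as $k \to \infty$, so by the squeeze theorem $\lim_{k \to \infty} a_k/k = a$.

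There is no real obstacle in any of these four statements; the content is genuinely elementary. The only care required is in parts (a)--(c) to select the threshold index $k_0$ beyond which the recursive hypothesis holds and to carry the induction starting from that index, absorbing the initial value $a_{k_0}$ into the constant $c$.
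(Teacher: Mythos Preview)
Your proof is correct and matches the paper's approach essentially line for line: the paper also unrolls the recurrence (written as a chain of inequalities rather than a formal induction) to obtain $a_k \ge ak + c$ with $c$ determined by the initial value at the threshold index, handles (b) and (c) by the obvious symmetry, and proves (d) by dividing through by $k$ and applying the squeeze theorem. The only cosmetic difference is that the paper anchors the telescoping at $a_{k_0-1}$ rather than $a_{k_0}$, yielding a slightly different (but equally valid) constant $c$.
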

        \begin{proof}
        	Suppose that $a_k\ge a_{k-1}+a$, for all $k\ge k_0$ and a certain $k_0>0$. Then
        	\begin{align*}
        		a_k\ &\ge\ a_{k-1}+a\ \ge\ a_{k-2}+2a\ \ge\ \cdots\ \ge\ a_{k_0-1}+(k-k_0+1)a\\
        		&=\ ak+(a_{k_0-1}-(k_0-1)a),
        	\end{align*}
        	for all $k\ge k_0$. Setting $c=a_{k_0-1}-(k_0-1)a$, claim (a) follows. Replacing ``$\ge$" with ``$\le$" or ``$=$", we see that (b) and (c) hold as well. For the proof of (d), note that
        	$$
        	\frac{ak+b}{k}\ \le\ \frac{a_k}{k}\ \le\ \frac{ak+c}{k}
        	$$
        	for all $k\gg0$. Thus, taking the limit for $k\rightarrow\infty$, the Squeeze Theorem for numerical sequences implies that $\lim\limits_{k\rightarrow\infty}\frac{a_k}{k}=a$, as wanted.
        \end{proof}

    \section{The limit behaviour of the sequence $\frac{\v(I^k)}{k}$}\label{sec4:FS2}
    
    {Let $I\subset S$ be a graded ideal. Due to Theorem \ref{Thm:v-function-linear}, there exist integers $a$ and $b$ such that  $\v(I^k)=ak+b$ for $k\gg0$. As a consequence, the limit $\lim_{k\rightarrow\infty}\v(I^k)/k$ exists and it is equal to $a$. In this section, we prove the following surprising fact.}
    \begin{Theorem}\label{Thm:limExistv(I^k)}
    	Let $I\subset S$ be a graded ideal. Then
    	$$
    	\lim_{k\rightarrow\infty}\frac{\v(I^k)}{k}\ =\ \alpha(I).
    	$$
    \end{Theorem}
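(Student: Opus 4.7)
The plan is to apply Lemma \ref{Lemma:Elementary-a_k}(d): it suffices to sandwich $\v(I^k)$ between two linear functions $\alpha(I)k + c_1$ and $\alpha(I)k + c_2$ for $k \gg 0$, whence the limit $\lim_{k \to \infty}\v(I^k)/k = \alpha(I)$ follows immediately.

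For the lower bound, Proposition \ref{Prop:vpk+1<k} already gives $\v_\p(I^{k-1}) + \alpha(I) \le \v_\p(I^k)$ for each $\p \in \Ass^\infty(I)$ and $k \gg 0$, so Lemma \ref{Lemma:Elementary-a_k}(a) yields a constant $c_\p$ with $\v_\p(I^k) \ge \alpha(I)k + c_\p$. Minimizing over the finite set $\Ass^\infty(I)$ and using $\v(I^k) = \min_{\p} \v_\p(I^k)$ for $k \gg 0$ produces $\v(I^k) \ge \alpha(I)k + c_1$. One can also argue directly: any witness $f$ with $(I^k:f) = \p$ satisfies $\p f \subseteq I^k$, so multiplying by a minimum-degree element of $\p$ forces $\deg f \ge k\alpha(I) - \alpha(\p)$, and $\alpha(\p)$ is uniformly bounded since $\bigcup_{k \ge 1}\Ass(I^k)$ is finite by Brodmann.

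The upper bound is the key new input and would generalize \cite[Proposition 3.11]{SS20} (proved there only for monomial ideals) along the lines of \cite[Theorem 4.7]{BM23}. The plan is to construct, for $k \gg 0$, an explicit element of degree at most $\alpha(I)k + c_2$ whose colon is an associated prime of $I^k$. Fix $k_0$ large enough that $\Ass(I^{k_0}) = \Ass^\infty(I)$ and $(I^{k_0+1}:\p) = I(I^{k_0}:\p)$ (Proposition \ref{Prop:ItakenOut}); pick $\p \in \Ass^\infty(I)$, a witness $f$ with $(I^{k_0}:f) = \p$ of degree $\v_\p(I^{k_0})$, and a homogeneous $u \in I$ of degree $\alpha(I)$. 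Setting $g_k = f u^{k-k_0}$, one has $\p g_k \subseteq I^{k_0}\cdot I^{k-k_0} \subseteq I^k$, so $\p \subseteq (I^k:g_k)$, while $\deg g_k = \alpha(I)k + (\v_\p(I^{k_0}) - k_0\alpha(I))$. The essential step is to show that for a careful choice of $u$ (and possibly of $\p$) one actually has $(I^k:g_k) \in \Ass(I^k)$---ideally $(I^k:g_k) = \p$---so that $\v(I^k) \le \deg g_k$; combined with the lower bound, Lemma \ref{Lemma:Elementary-a_k}(d) then closes the argument. This last verification is the main obstacle: because $I \subseteq \p$ for every $\p \in \Ass^\infty(I)$, the element $u$ lies in every associated prime of $I^k$ and is a zero-divisor modulo every primary component of $I^k$, so one cannot simply ``cancel'' $u$. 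Circumventing this---by choosing $u$ cleverly among the minimum-degree generators, or by modifying $g_k$ using the identity $(I^k:\p) = I(I^{k-1}:\p)$ to force the colon to remain an associated prime---is where the technical work of the generalized Saha--Sengupta proposition concentrates.
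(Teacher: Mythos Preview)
Your overall plan---sandwich $\v(I^k)$ between $\alpha(I)k+c_1$ and $\alpha(I)k+c_2$ and invoke Lemma \ref{Lemma:Elementary-a_k}(d)---is exactly the paper's strategy, and your lower bound via Proposition \ref{Prop:vpk+1<k} and Lemma \ref{Lemma:Elementary-a_k}(a) matches the paper's Lemma \ref{Lemma:v-funct-lower-bounds}.

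The gap is in the upper bound. You try to \emph{lift} a witness: start with $f$ satisfying $(I^{k_0}:f)=\p$ and set $g_k=fu^{k-k_0}$ with $\deg u=\alpha(I)$. You correctly identify the obstacle---showing $(I^k:g_k)\in\Ass(I^k)$---and then leave it unresolved, saying this is where the generalized Saha--Sengupta proposition ``concentrates.'' But that proposition (Proposition \ref{Prop:General-Saha-Sengupta}) does not work this way, and your obstacle is real: since $u$ lies in every associated prime of $I^k$, there is no evident mechanism to keep $(I^k:g_k)$ from jumping strictly above $\p$, and neither a clever choice of $u$ nor the identity $(I^k:\p)=I(I^{k-1}:\p)$ gives you that control.

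The paper's argument goes in the \emph{opposite} direction and thereby avoids the obstacle entirely. Proposition \ref{Prop:General-Saha-Sengupta} says: for any homogeneous $f\notin I$, one has $\v(I)\le\v(I:f)+\deg(f)$. The point is that $\Ass(I:f)\subseteq\Ass(I)$ (from the exact sequence $0\to S/(I:f)\to S/I\to S/(I,f)\to 0$), so if $g$ is a witness for $(I:f)$ then $((I:f):g)=(I:fg)\in\Ass(I)$ automatically---no verification needed. Iterating with $f\in I$ of degree $\alpha(I)$ gives
\[
\v(I^k)\ \le\ \v(I^k:f^{k-1})+(k-1)\alpha(I),
\]
and the ascending chain $I\subseteq(I^2:f)\subseteq(I^3:f^2)\subseteq\cdots$ stabilizes since $S$ is Noetherian, making $\v(I^k:f^{k-1})$ eventually constant. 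This is the content of Lemma \ref{Lemma:UpperBoundAlpha(I)}. So rather than pushing a fixed witness forward and fighting to keep its colon prime, the paper pulls the problem back through successive colons where the associated-prime containment comes for free.
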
\smallskip
    
    In view of statements (a), (b) and (d) of Lemma \ref{Lemma:Elementary-a_k}, to prove the theorem, it is enough to find lower and upper bounds for $\v(I^k)$, which are linear functions in $k$ of the type $\alpha(I)k+c$, for some constant $c$, and for all $k\gg0$.
    
    We begin by providing the linear upper bound.
    \begin{Lemma}\label{Lemma:UpperBoundAlpha(I)}
    	Let $I\subset S$ be a graded ideal. Then, there exists a constant $c$ such that
    	$$
    	\v(I^k)\ \le\ \alpha(I)k+c,\ \ \textit{for all}\ k\gg0.
    	$$
    \end{Lemma}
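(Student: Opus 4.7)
The plan is to reduce to the recursive inequality $\v(I^k) \le \v(I^{k-1}) + \alpha(I)$ for $k \gg 0$, and then to conclude via Lemma \ref{Lemma:Elementary-a_k}(b) applied with $a_k = \v(I^k)$ and $a = \alpha(I)$; this immediately produces a constant $c$ such that $\v(I^k) \le \alpha(I)k + c$ for all $k \gg 0$, which is exactly what the lemma asserts.

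The recursive inequality is precisely the content of Proposition \ref{Prop:General-Saha-Sengupta}, the paper's generalization to arbitrary graded ideals of the Saha--Sengupta monomial statement. To set the stage for (and to organise the proof of) that proposition, I would first use Theorem \ref{Thm:v-function-linear} together with Brodmann's theorem to fix a single stable prime $\p^* \in \Ass^\infty(I)$ for which $\v(I^k) = \v_{\p^*}(I^k)$ holds for all $k \gg 0$; this is possible because each $\v_\p(I^k)$ is eventually linear, so the minimizer stabilizes. The structural tool behind Proposition \ref{Prop:General-Saha-Sengupta} is Proposition \ref{Prop:ItakenOut}, which gives $(I^k:\p^*) = I(I^{k-1}:\p^*)$ for $k \gg 0$: any class in $(I^k:\p^*)/I^k$ can be represented in the form $ug$ with $u$ a minimal homogeneous generator of $I$ and $g \in (I^{k-1}:\p^*)$. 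Choosing $u$ of minimal degree $\alpha(I)$ and $g$ of degree $\v_{\p^*}(I^{k-1})$ produces a candidate of the correct degree $\v_{\p^*}(I^{k-1}) + \alpha(I)$, and the inclusion $ug\,\p^* \subseteq u I^{k-1} \subseteq I^k$ is automatic.

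The main obstacle is that the naive witness $ug$ need not witness the $\v$-number of $I^k$: on the one hand $ug$ may already lie inside $I^k$, and on the other hand the colon ideal $(I^k : ug)$ may strictly contain $\p^*$ and therefore fail to be an associated prime of $I^k$. Handling these two failures is exactly where the BM23-style case analysis inside Proposition \ref{Prop:General-Saha-Sengupta} intervenes, typically by switching from $\p^*$ to a suitable associated prime of $I^k$ sitting above $(I^k:ug)$, or by modifying $g$ within $(I^{k-1}:\p^*)$ while keeping the degree increment bounded by $\alpha(I)$. Once that proposition is granted, the present lemma follows by a single appeal to Lemma \ref{Lemma:Elementary-a_k}(b).
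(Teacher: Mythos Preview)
Your plan rests on the recursive inequality $\v(I^k)\le\v(I^{k-1})+\alpha(I)$, but this is \emph{not} what Proposition~\ref{Prop:General-Saha-Sengupta} says. That proposition asserts $\v(J)\le\v(J:f)+\deg(f)$ for any homogeneous $f\notin J$; taking $J=I^k$ and $f\in I$ of degree $\alpha(I)$ yields $\v(I^k)\le\v(I^k:f)+\alpha(I)$, and $(I^k:f)$ is generally strictly larger than $I^{k-1}$. Your attempt to prove the recursive inequality directly via Proposition~\ref{Prop:ItakenOut} runs into exactly the obstruction you name and then wave away: from $(I^k:\p^*)=I(I^{k-1}:\p^*)$ one can write any witness for $\v_{\p^*}(I^k)$ as $ug$ with $u\in I$, $g\in(I^{k-1}:\p^*)$, which gives the \emph{lower} bound $\v_{\p^*}(I^k)\ge\v_{\p^*}(I^{k-1})+\alpha(I)$ (this is the argument in Proposition~\ref{Prop:vpk+1<k}). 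For the upper bound one must \emph{produce} a witness, and if you start from $g$ with $(I^{k-1}:g)=\p^*$, the identity $\p^*=\bigcap_i(I^k:u_ig)$ only guarantees $(I^k:u_ig)=\p^*$ for \emph{some} generator $u_i$, with no control on $\deg(u_i)$; this is why Proposition~\ref{Prop:vpk+1<k} yields only $\omega(I)$ on that side. Your proposed fixes (switching primes, modifying $g$) are not arguments, and in fact establishing $\v(I^k)\le\v(I^{k-1})+\alpha(I)$ for $k\gg0$ is essentially equivalent to the full Theorem~\ref{Thm:limExistv(I^k)}, so you are assuming what the lemma is meant to help prove.

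The paper's proof avoids this entirely. It iterates the genuine Proposition~\ref{Prop:General-Saha-Sengupta} with a fixed $f\in I$ of degree $\alpha(I)$ to obtain
\[
\v(I^k)\ \le\ \v(I^k:f^{k-1})+(k-1)\alpha(I),
\]
and then observes that the ascending chain $I\subseteq(I^2:f)\subseteq(I^3:f^2)\subseteq\cdots$ stabilises because $S$ is Noetherian, so $\v(I^k:f^{k-1})$ is eventually a constant. No recursion between consecutive powers, no choice of stable prime, and no appeal to Theorem~\ref{Thm:v-function-linear} or Proposition~\ref{Prop:ItakenOut} is needed.
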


    The proof of this lemma is based upon the next bound which generalizes a result of Saha and Sengupta \cite[Proposition 3.11]{SS20}.
    \begin{Proposition}\label{Prop:General-Saha-Sengupta}
    	Let $I\subset S$ be a graded ideal and let $f\in S$ be a homogeneous element not belonging to $I$. Then
    	$$
    	\v(I)\ \le\ \v(I:f)+\deg(f).
    	$$
    \end{Proposition}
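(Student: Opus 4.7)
The strategy is a direct application of the definition of the $\v$-number combined with the colon identity $((I:I_1):I_2)=(I:I_1I_2)$ listed as rule (ii) at the beginning of Section \ref{sec3:FS2}. I would first set up the witnessing element, then transport it across the colon, and finally verify homogeneity and properness.

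More precisely, the plan is as follows. Since $f\notin I$ and $f$ is homogeneous, the colon $(I:f)$ is a proper graded ideal of $S$, so $\Ass(I:f)\neq\emptyset$ and $\v(I:f)$ is defined. Set $e=\v(I:f)$. By Theorem \ref{Thm:GRVtheorem}, there exists a homogeneous element $g\in S_e$ together with an associated prime $\p\in\Ass(I:f)$ such that $((I:f):g)=\p$. Applying rule (ii), this rewrites as
$$
(I:fg)\ =\ ((I:f):g)\ =\ \p.
$$
Since $\p$ is a prime ideal of $S$, it is proper, and therefore $fg\notin I$ (otherwise $(I:fg)=S$). Consequently $\p\in\Ass(S/I)=\Ass(I)$, and the homogeneous element $fg$ of degree $\deg(f)+\deg(g)=\deg(f)+\v(I:f)$ satisfies $(I:fg)=\p\in\Ass(I)$. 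By the very definition of $\v(I)$ as a minimum over such degrees, we obtain $\v(I)\le\deg(f)+\v(I:f)$.

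There is no genuine obstacle here: the content of the argument is the observation that rule (ii) turns a witness of $\v(I:f)$ into a witness of $\v(I)$, after paying a ``cost'' of $\deg(f)$. The only points requiring a moment of care are verifying that $(I:f)$ is proper so that $\v(I:f)$ is defined, and that $fg\notin I$ so that the prime $\p$ produced actually belongs to $\Ass(I)$; both are immediate from the fact that the colons in question equal proper prime ideals.
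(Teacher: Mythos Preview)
Your proof is correct and, in fact, slightly more streamlined than the paper's. The paper splits into two cases according to whether $(I:f)$ itself lies in $\Ass(I)$, and in the second case invokes the short exact sequence
\[
0\rightarrow S/(I:f)\rightarrow S/I\rightarrow S/(I,f)\rightarrow 0
\]
to deduce the containment $\Ass(I:f)\subseteq\Ass(I)$ before concluding. You bypass both the case distinction and the exact sequence by observing directly that $(I:fg)=\p$ with $\p$ prime and $fg\notin I$ already exhibits $\p$ as the annihilator of a nonzero element of $S/I$, hence $\p\in\Ass(I)$ by definition. What the paper's route buys is the auxiliary inclusion $\Ass(I:f)\subseteq\Ass(I)$ as a byproduct; what your route buys is brevity and the avoidance of any homological input.
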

    \begin{proof}
    	If $(I:f)\in\Ass(I)$, then $\v(I)\le\deg(f)$. Since, also $\v(I:f)\ge0$, the asserted inequality follows. Suppose that $(I:f)\notin\Ass(I)$. Note that $(I:f)$ is a proper ideal, because $f\notin I$. From the short exact sequence
    	$$
    	0\rightarrow S/(I:f)\rightarrow S/I\rightarrow S/(I,f)\rightarrow0
    	$$
    	it follows that $\Ass(I:f)\subseteq\Ass(I)$. Let $g\in S$ be a homogeneous element such that $((I:f):g)\in\Ass(I:f)$ and $\deg(g)=\v(I:f)$. Then $$((I:f):g)=(I:(fg))\in\Ass(I).$$ Thus $\v(I)\le\deg(fg)=\deg(f)+\deg(g)=\deg(f)+\v(I:f)$.
    \end{proof}
    
    Now, we are ready for the proof of Lemma \ref{Lemma:UpperBoundAlpha(I)} which follows closely the argument given in \cite[Theorem 4.7]{BM23}.
    \begin{proof}[Proof of Lemma \ref{Lemma:UpperBoundAlpha(I)}]
    	Let $f\in I$ with $\deg(f)=\alpha(I)$. Then for any $k\ge2$, $f^{k-1}\notin I^k$ by degree reasons. By the previous proposition, we have $\v(I^k)\le\v(I^k:f)+\alpha(I)$. Applying again the proposition we obtain $\v(I^k:f)\le\v(I^k:f^2)+\alpha(I)$ and hence $\v(I^k)\le\v(I^k:f^2)+2\alpha(I)$. Iterating this reasoning, we get
    	\begin{align}\label{eq:v-inequality-SNoetherian}
    		\v(I^k)\ \le\ \alpha(I)k+(\v(I^k:f^{k-1})-\alpha(I)).
    	\end{align}
    	On the other hand, consider the ascending chain
    	$$
    	I\subseteq(I^2:f)\subseteq(I^3:f^2)\subseteq\cdots\subseteq(I^k:f^{k-1})\subseteq\cdots.
    	$$
    	Since $S$ is Noetherian, there exists $k_0>0$ such that $(I^k:f^{k-1})=(I^{k_0}:f^{k_0-1})$, for all $k\ge k_0$. Thus for all $k\ge k_0$, the number $\v(I^k:f^{k-1})-\alpha(I)$ is a constant $c$. Hence, by equation (\ref{eq:v-inequality-SNoetherian}) we see that $\v(I^{k})\le\alpha(I)k+c$, for all $k\gg0$.
    \end{proof}

    Next, combining the second lower bound in Proposition \ref{Prop:vpk+1<k} with Lemma \ref{Lemma:Elementary-a_k}(a) we obtain the next lower linear bound for $\v(I^k)$.
    \begin{Lemma}\label{Lemma:v-funct-lower-bounds}
    	Let $I\subset S$ be a graded ideal. Then, there exists a constant $c$ such that $\v(I^k)\ge\alpha(I)k+c$, for all $k\gg0$.
    \end{Lemma}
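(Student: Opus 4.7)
The plan is a short, direct application of two ingredients already on hand. First, I would invoke the ``in particular'' clause of Proposition \ref{Prop:vpk+1<k}, which gives the recursive lower bound
$$
\v(I^k)\ \ge\ \v(I^{k-1})+\alpha(I),\qquad\text{for all }k\gg0.
$$
This is exactly a hypothesis of the shape required by Lemma \ref{Lemma:Elementary-a_k}(a), applied to the numerical sequence $a_k=\v(I^k)$ with $a=\alpha(I)$.

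Next, I would feed this into Lemma \ref{Lemma:Elementary-a_k}(a). It immediately produces a constant $c\in\ZZ$ (depending on $\v(I^{k_0})$ and $k_0$, where $k_0$ is the threshold beyond which the recursive inequality holds) such that
$$
\v(I^k)\ \ge\ \alpha(I)k+c,\qquad\text{for all }k\gg0,
$$
which is precisely the statement of the lemma.

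There is essentially no obstacle here: both ingredients have already been proved, and the argument is a one-line chain. The only minor subtlety worth pointing out is that one must start the iteration from a $k_0$ large enough for Proposition \ref{Prop:vpk+1<k} to apply; the value of the constant $c$ is determined by $c=\v(I^{k_0-1})-(k_0-1)\alpha(I)$, as is transparent from the telescoping inside the proof of Lemma \ref{Lemma:Elementary-a_k}(a). In particular, no additional structural result about $\Soc_\p(I)$, $\mathcal{F}_\p(I)$, or primary decomposition is needed at this stage; the lower bound is entirely a consequence of the recursive estimate for the $\v$-function.
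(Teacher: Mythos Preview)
Your proposal is correct and matches the paper's own argument essentially verbatim: the paper also derives the lemma by combining the lower bound $\v(I^{k-1})+\alpha(I)\le\v(I^{k})$ from Proposition~\ref{Prop:vpk+1<k} with Lemma~\ref{Lemma:Elementary-a_k}(a). There is nothing to add or correct.
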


    Now, we are ready for the proof of Theorem \ref{Thm:limExistv(I^k)}.
    \begin{proof}[Proof of Theorem \ref{Thm:limExistv(I^k)}]
    	It follows by combining Lemmas \ref{Lemma:UpperBoundAlpha(I)}, \ref{Lemma:v-funct-lower-bounds} and \ref{Lemma:Elementary-a_k}(d).
    \end{proof}
    
    {Let $I\subset S$ be a graded ideal, and let $\p\in\Ass^\infty(I)$. By Theorem \ref{Thm:v-function-linear}, $\v_\p(I^k)$ is also an eventually linear function of the form $a_\p k+b_\p$ for $k\gg0$. By Proposition \ref{Prop:vpk+1<k}, we obtain that $\alpha(I)\le a_\p\le\omega(I)$. However, in contrast to Theorem \ref{Thm:limExistv(I^k)}, $a_\p$ needs not to be equal to $\alpha(I)$.}
    
    For instance, consider the monomial ideal $I=(x^5,x^4y^3,x^2y^4)$ of $S=K[x,y]$. By Proposition \ref{Prop:Ass(I)xy}(a) we have that $\p_x=(x)\in\Ass(I^k)=\Ass^\infty(I)$ for all $k$. However, by Corollary \ref{Cor:pxpylinV}(a) we have that $\v_{\p_x}(I^k)=\deg(x^2y^4)k-1=6k-1$, for all $k\ge1$, and the slope of this linear function is $6>5=\alpha(I)$.
    
    \section{The $\v$-number of monomial ideals in two variables}\label{sec5:FS2}
    In this section, we consider monomial ideals of the polynomial ring in two variables $S = K[x,y]$. Let $I\subset S$ be a monomial ideal. As customary, we denote by $G(I)$ the unique minimal monomial generating set of $I$. Then
    \[
    G(I)\ =\ \{x^{a_1}y^{b_1},x^{a_2}y^{b_2},\dots,x^{a_m}y^{b_m}\},
    \]
    where ${\bf a}:a_1>a_2>\cdots > a_m\ge 0$ and ${\bf b}:0\le b_1<b_2<\cdots<b_m$. Conversely, given any two such sequences ${\bf a}$ and ${\bf b}$, the set $\{x^{a_1}y^{b_1},x^{a_2}y^{b_2},\dots,x^{a_m}y^{b_m}\}$ is the minimal monomial generating set of a monomial ideal of $S$.
    
    Therefore, the monomial ideals of $S=K[x,y]$ are in bijection with all pairs $({\bf a},{\bf b})$ of sequences ${\bf a}:a_1>a_2>\cdots > a_m\ge 0$ and ${\bf b}:0\le b_1<b_2<\cdots<b_m$ as above. Hereafter, we write $I=I_{\bf a,b}$ for $I=(x^{a_1}y^{b_1},x^{a_2}y^{b_2},\dots,x^{a_m}y^{b_m})$.\smallskip
    
    The natural $K$-basis of $S/I_{\bf a,b}$ consists of the residue classes (modulo $I_{\bf a,b}$) of the monomials not belonging to $I_{\bf a,b}$. These basis elements can be represented by the lattice points $(c,d)\in\ZZ_{\ge0}\times\ZZ_{\ge0}$ such that $x^cy^d\notin I_{\bf a,b}$, as in the next picture.
    
    \begin{figure}[H]
    	\tikzfig{staircasediagram}
    \end{figure}\smallskip
    
    {In this section we investigate the $\v$-function of $I_{\bf a,b}$ in terms of ${\bf a}$ and ${\bf b}$.}
	
	The associated prime ideals of a monomial ideal $I$ are monomial prime ideals, that is, ideals generated by a subset of the variables \cite[Corollary 1.3.9]{JT}. Thus, in our case $\Ass(I)\subseteq\{(x),(y),(x,y)\}$. We set $\p_x=(x)$, $\p_y=(y)$ and $\m=(x,y)$.
	
	We can compute $\Ass(I_{\bf a,b})$ in terms of the sequences ${\bf a}$ and ${\bf b}$.
	\begin{Proposition}\label{Prop:Ass(I)xy}
		Let $I=I_{\bf a,b}\subset S$ be a monomial ideal. Then,
		\begin{enumerate}
			\item[\textup{(a)}] $\p_x\in\Ass(I)$ if and only if $a_m>0$.
			\item[\textup{(b)}] $\p_y\in\Ass(I)$ if and only if $b_1>0$.
			\item[\textup{(c)}] $\m\in\Ass(I)$ if and only if $m>1$, i.e., $I$ is not a principal ideal.
		\end{enumerate}
	\end{Proposition}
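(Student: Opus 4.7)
The plan is to use the standard characterization that for a monomial ideal $I$, a monomial prime $\mathfrak{p}$ lies in $\Ass(I)$ if and only if there exists a monomial $u \notin I$ with $(I:u) = \mathfrak{p}$. Since $\Ass(I) \subseteq \{\mathfrak{p}_x, \mathfrak{p}_y, \mathfrak{m}\}$, it suffices to check each of the three primes separately. In each direction I will either exhibit an explicit witness monomial or derive an obstruction from the shape of the generators.

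For the sufficiency in (a), suppose $a_m > 0$ and set $u = x^{a_m-1} y^{b_m}$. The key divisibility check is: a generator $x^{a_i} y^{b_i}$ divides $u$ (respectively, $y^t u$) only if $a_i \le a_m - 1$, which forces $i > m$, a contradiction. This simultaneously shows $u \notin I$ and that no power of $y$ lies in $(I:u)$. On the other hand, $xu = x^{a_m} y^{b_m} \in G(I) \subset I$. Since $(I:u)$ is a monomial ideal contained in $\mathfrak{m}$ that contains $x$ but no pure power of $y$, it must equal $(x) = \mathfrak{p}_x$. For necessity, if $a_m = 0$, then $y^{b_m} \in I$ and hence $y^{b_m} \in (I:u)$ for every monomial $u$; this prevents $(I:u) = (x)$ from ever holding, since $y^{b_m} \notin (x)$. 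Part (b) is completely symmetric, using the witness $u = x^{a_1} y^{b_1-1}$ when $b_1 > 0$, and using $x^{a_1} \in I \setminus (y)$ as the obstruction when $b_1 = 0$.

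For the sufficiency in (c), assume $m > 1$ and pick any index $i$ with $1 \le i \le m-1$; since $a_{m-1} \ge 1$ and $b_2 \ge 1$, we may take $i$ so that both $a_i \ge 1$ and $b_{i+1} \ge 1$ (in particular $i = m-1$ works). Set
\[
u \;=\; x^{a_i - 1}\, y^{b_{i+1} - 1}.
\]
A generator $x^{a_j} y^{b_j}$ divides $u$ only if $a_j \le a_i - 1$ (forcing $j > i$) and $b_j \le b_{i+1} - 1$ (forcing $j \le i$), an impossibility, so $u \notin I$. Meanwhile $xu = x^{a_i} y^{b_{i+1}-1}$ is divisible by $x^{a_i} y^{b_i}$ since $b_i < b_{i+1}$, and $yu = x^{a_i - 1} y^{b_{i+1}}$ is divisible by $x^{a_{i+1}} y^{b_{i+1}}$ since $a_{i+1} < a_i$. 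Hence $(I:u) \supseteq \mathfrak{m}$ and, being proper, equals $\mathfrak{m}$. For necessity, if $m = 1$ then $I = (x^{a_1} y^{b_1})$ is principal and its primary decomposition reduces to $(x^{a_1}) \cap (y^{b_1})$ (with the convention that trivial factors are omitted), whose associated primes are a subset of $\{\mathfrak{p}_x, \mathfrak{p}_y\}$, so $\mathfrak{m} \notin \Ass(I)$.

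The only real obstacle is bookkeeping: each of the six implications reduces to a divisibility check among monomials indexed by the strict inequalities in ${\bf a}$ and ${\bf b}$, and one must verify that the constructed witnesses $u$ really lie outside $I$ while the proposed colon ideal equals the target prime exactly. This is routine, but must be done carefully so as to exclude stray generators from the colon ideal. No deeper machinery beyond the monomial characterization of associated primes (\cite[Corollary 1.3.9]{JT}) is required.
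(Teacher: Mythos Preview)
Your proof is correct. The approach differs from the paper's, however. For (a) and (b) the paper argues more abstractly: if $\p_x\in\Ass(I)$ then $I\subseteq\p_x$, forcing $a_m>0$; conversely $a_m>0$ gives $I\subseteq\p_x$, and since $\p_x$ has height one it is a minimal (hence associated) prime of $I$. For (c) the paper argues by contradiction on the primary decomposition: if $\m\notin\Ass(I)$ then $\Ass(I)\subseteq\{\p_x,\p_y\}$ and $I=(x^c)\cap(y^d)$ is principal. Your argument instead produces explicit witness monomials $u$ with $(I:u)$ equal to the desired prime, and checks the divisibility constraints directly from the strict inequalities in $\mathbf{a}$ and $\mathbf{b}$. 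The paper's route is shorter and more conceptual; yours is more hands-on but has the virtue that the witnesses you build---$x^{a_m-1}y^{b_m}$, $x^{a_1}y^{b_1-1}$, and $x^{a_i-1}y^{b_{i+1}-1}$---are precisely the monomials that reappear in Corollary~\ref{Cor:pxpylinV} and Proposition~\ref{2VarPropColon(x,y)} when computing $\v_{\p_x}$, $\v_{\p_y}$, and $\v_\m$, so your proof doubles as a warm-up for those calculations.
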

	\begin{proof}
		If $\p_x\in\Ass(I)$ then $I\subseteq\p_x=(x)$. Hence $x$ divides all minimal monomial generators of $I$. In particular, $x$ divides $x^{a_m}y^{b_m}$ and $a_m>0$.
		
		Conversely, let $a_m>0$. Then $x$ divides all minimal monomial generators of $I$. Hence $I\subseteq\p_x=(x)$. Since $\p_x$ is of height one, it follows that $\p_x\in\Ass(I)$.
		
		This proves (a), statement (b) can be proved similarly.
		
		Finally, for the proof of (c), suppose $\m\in\Ass(I)$. If $I$ is a principal ideal, then for some $c$ and $d$, $I=(x^cy^d)=(x^c)\cap(y^d)=\p_x^c\cap\p_y^d$ is the primary decomposition of $I$, which contradicts our assumption. Hence $I$ is not principal.
		
		Conversely, suppose $I$ is not principal, but $\m\notin\Ass(I)$. Then $\Ass(I)\subseteq\{\p_x,\p_y\}$. Since $\p_x$ and $\p_y$ are height one prime ideals, $I=\p_x^c\cap\p_y^d=(x^c)\cap(y^d)=(x^cy^d)$ for some $c$ and $d$, against our assumption. The assertion follows.
	\end{proof}
	
	The following lemma is required.
	\begin{Lemma}\label{Lemma:(xa1yb1)k_(xamybm)k}
		Let $I=I_{\bf a,b}\subset S$ be a monomial ideal. Then,
		$$
		x^{ka_1}y^{kb_1},x^{ka_m}y^{kb_m}\in G(I^k),\ \ \textit{for all}\ \ k\ge1.
		$$
	\end{Lemma}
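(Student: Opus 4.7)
The plan is to use the description of $I^k$ as the monomial ideal generated by the products $\prod_{j=1}^{k}x^{a_{i_j}}y^{b_{i_j}}=x^{\sum_{j=1}^k a_{i_j}}y^{\sum_{j=1}^k b_{i_j}}$, where $(i_1,\dots,i_k)\in\{1,\dots,m\}^k$. The two claims will be proved by symmetric arguments exploiting the fact that, by the very definition of $I_{\bf a,b}$, the pair $(a_1,b_1)$ uniquely minimises the $y$-exponent among the generators of $I$, while $(a_m,b_m)$ uniquely minimises the $x$-exponent.

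I would first treat $u=x^{ka_1}y^{kb_1}$. Clearly $u\in I^k$, since it arises from the choice $i_1=\cdots=i_k=1$. To prove minimality, assume some $x^cy^d\in I^k$ divides $u$, so that $c\le ka_1$ and $d\le kb_1$. By the description of $I^k$ above, there exist indices $i_1,\dots,i_k$ with $c\ge\sum_{j=1}^k a_{i_j}$ and $d\ge\sum_{j=1}^k b_{i_j}$. Because $b_1<b_i$ for every $i>1$, one has $\sum_{j=1}^k b_{i_j}\ge kb_1$ with equality if and only if $i_1=\cdots=i_k=1$. Combining with $d\le kb_1$ forces $d=kb_1$ and $i_1=\cdots=i_k=1$, hence $c\ge ka_1$ and in fact $c=ka_1$. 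Thus the only element of $I^k$ dividing $u$ is $u$ itself, so $u\in G(I^k)$.

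The argument for $v=x^{ka_m}y^{kb_m}$ is entirely symmetric, swapping the roles of the two sequences. If $x^cy^d\in I^k$ divides $v$, then $c\le ka_m$; since $a_m<a_i$ for $i<m$, we have $\sum_{j=1}^k a_{i_j}\ge ka_m$ with equality only when $i_1=\cdots=i_k=m$. Hence $c=ka_m$, all indices are $m$, and $d=kb_m$, so $v\in G(I^k)$.

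The key observation — and essentially the only content of the proof — is the uniqueness of the minimisers of the two coordinate functions among the generators of $I$, which is built into the hypothesis that the sequence ${\bf a}$ is strictly decreasing and ${\bf b}$ strictly increasing. No serious obstacle is expected; the argument is a short bookkeeping with exponent sums.
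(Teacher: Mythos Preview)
Your proof is correct and follows essentially the same approach as the paper: both arguments exploit the strict monotonicity of $\mathbf{a}$ and $\mathbf{b}$ to do exponent bookkeeping on the products generating $I^k$. The only cosmetic difference is that the paper picks an arbitrary generator $x^ry^s$ from the spanning set of $I^k$ and shows it cannot divide $x^{ka_1}y^{kb_1}$ (because $r<ka_1$ and $s>kb_1$), whereas you start from an arbitrary monomial of $I^k$ dividing $u$ and squeeze the exponents to force equality; these are two phrasings of the same computation.
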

	\begin{proof}
		Let $k\geq 1$. We know that
		$$
		I^k\ =\ (\prod_{i=1}^m (x^{a_i}y^{b_i})^{k_i}\ :\ \sum_{i=1}^m k_i = k).
		$$
		Let $x^r y^s$ be an arbitrary generator of $I^k$ different from $x^{ka_1}y^{kb_1}$. Then, we have $r = k_1 a_1 + \dots k_m a_m$, $s = k_1 b_1 + \dots k_m b_m$, $\sum_{i=1}^m k_i = k$ and $k_i>0$ for some $i\ne1$. Thus,
		$$
		ka_1 = k_1a_1+k_2 a_1+\dots k_m a_1 > k_1 a_1 + k_2 a_2 + \dots k_m a_m = r
		$$
		and
		$$
		kb_1 = k_1b_1+k_2 b_1+\dots k_m b_1 < k_1 b_1 + k_2 b_2 + \dots k_m b_m = s.
		$$
		Therefore, $x^ry^s$ does not divide $x^{ka_1}y^{kb_1}$. This shows that $x^{ka_1}y^{kb_1}$ is a minimal generator of $I^k$. By a similar argument we obtain that $x^{ka_m}y^{kb_m}\in G(I^k)$.
	\end{proof}
	
	\begin{Corollary}\label{Cor:Ass(I)K[x,y]}
		Let $I=I_{\bf a,b}\subset S$ be a monomial ideal. Then $\Ass(I^k)=\Ass^\infty(I)$, for all $k\ge1$. In particular, $\textup{astab}(I)=1$.
	\end{Corollary}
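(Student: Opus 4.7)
The plan is to reduce the statement directly to Proposition \ref{Prop:Ass(I)xy} applied to the ideal $I^k$ in place of $I$. Since $I^k$ is itself a monomial ideal of $S=K[x,y]$, we can write $I^k = I_{\bf a',b'}$ for some pair of sequences ${\bf a'}: a'_1>\cdots>a'_{m'}\ge 0$ and ${\bf b'}: 0\le b'_1<\cdots<b'_{m'}$. By the proposition, membership of $\p_x$, $\p_y$, $\m$ in $\Ass(I^k)$ is governed respectively by the positivity of $a'_{m'}$, the positivity of $b'_1$, and whether $m'>1$. So the whole question reduces to expressing these three quantities in terms of the original data ${\bf a}$ and ${\bf b}$, and checking that they are independent of $k$.

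The key input is Lemma \ref{Lemma:(xa1yb1)k_(xamybm)k}, which guarantees that $x^{ka_1}y^{kb_1}$ and $x^{ka_m}y^{kb_m}$ lie in $G(I^k)$. A one-line inequality $\sum k_i a_i \ge k a_m$ (with equality only when $k_m=k$) shows that the minimum $x$-exponent occurring in any generator of $I^k$ is exactly $ka_m$, hence $a'_{m'}=ka_m$; dually, $b'_1=kb_1$. For the cardinality $m'$, I will observe that $m=1$ forces $I$ (and thus $I^k$) to be principal, giving $m'=1$, while $m>1$ makes $x^{ka_1}y^{kb_1}$ and $x^{ka_m}y^{kb_m}$ two distinct elements of $G(I^k)$, hence $m'\ge 2$.

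Plugging these identifications into Proposition \ref{Prop:Ass(I)xy} applied to $I^k$ gives $\p_x\in\Ass(I^k)\iff a_m>0$, $\p_y\in\Ass(I^k)\iff b_1>0$, and $\m\in\Ass(I^k)\iff m>1$. Each condition is manifestly independent of $k\ge 1$, and each coincides with the corresponding condition characterising $\Ass(I)$. This yields $\Ass(I^k)=\Ass(I)$ for all $k\ge 1$, whence $\Ass^\infty(I)=\Ass(I)$ and $\astab(I)=1$ follow at once. Since Lemma \ref{Lemma:(xa1yb1)k_(xamybm)k} has already done the only delicate step, I do not anticipate any serious obstacle; the remainder is bookkeeping.
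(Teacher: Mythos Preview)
Your proposal is correct and follows essentially the same approach as the paper: both arguments rest on Proposition~\ref{Prop:Ass(I)xy} and Lemma~\ref{Lemma:(xa1yb1)k_(xamybm)k}, checking that the three conditions $a_m>0$, $b_1>0$, $m>1$ governing membership of $\p_x,\p_y,\m$ in $\Ass(I^k)$ are independent of $k$. The only difference is organizational: you explicitly write $I^k=I_{{\bf a}',{\bf b}'}$ and compute $a'_{m'}=ka_m$, $b'_1=kb_1$ before applying the proposition once, whereas the paper treats each prime separately and argues the forward direction for $\p_x$ by contradiction (assuming $a_m=0$ and using $y^{kb_m}\in G(I^k)$ to contradict $I^k\subseteq\p_x$); the content is the same.
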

	\begin{proof}
		Let us prove that $\Ass(I)=\Ass(I^k)$ for all $k\ge2$. By the previous proposition, $\p_x\in\Ass(I)$ if and only if $x$ divides all minimal monomial generators of $I$. Hence, if $\p_x\in\Ass(I)$, then $\p_x\in\Ass(I^k)$ for all $k\ge2$, as well.
		
		Now, suppose that $\p_x\in\Ass(I^k)$ for some $k\ge2$, but $\p_x\notin\Ass(I)$. Then, by Proposition \ref{Prop:Ass(I)xy}(a) we have $a_m=0$. Hence $y^{b_m}\in G(I)$. By the previous corollary, $y^{kb_m}\in G(I^k)$, as well. But this is impossible, because $y^{kb_m}\notin\p_x$, but by assumption $\p_x$ contains $I^k$. Hence $a_m>0$ and $\p_x\in\Ass(I)$, as wanted.
		
		The same reasoning can be applied to show that $\p_y\in\Ass(I)$ if and only if $\p_y\in\Ass(I^k)$, for any $k\ge2$.
		
		Finally, by the previous proposition, $\m\in\Ass(I)$ if and only $I$ is not principal. Lemma \ref{Lemma:(xa1yb1)k_(xamybm)k} implies that $I$ is not principal if and only if $I^k$ is not principal for any $k\ge2$. Thus $\m\in\Ass(I)$ if and only if $\m\in\Ass(I^k)$ for any $k\ge2$.
	\end{proof}
	
	Next, we compute the functions $\v_{\p_x}(I_{\bf a,b}^k)$, $\v_{\p_y}(I_{\bf a,b}^k)$.
	\begin{Corollary}\label{Cor:pxpylinV}
		Let $I=I_{\bf a,b}\subset S$ be a monomial ideal. The following holds.
		\begin{enumerate}
			\item[\textup{(a)}] If $a_m>0$, then $\v_{\p_x}(I^k)=k(a_m+b_m)-1$, for all $k\ge1$.
			\item[\textup{(b)}] If $b_1>0$, then $\v_{\p_y}(I^k)=k(a_1+b_1)-1$, for all $k\ge1$.
		\end{enumerate}
	\end{Corollary}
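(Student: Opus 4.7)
The plan is to prove (a); part (b) follows by the entirely symmetric argument, swapping the roles of $x$ and $y$. Since $I^k$ and $\p_x=(x)$ are monomial, the module $(I^k:\p_x)/I^k$ is $\ZZ^2$-graded with a minimal generating set consisting of monomial classes, so Theorem \ref{Thm:GRVtheorem}(a) reduces $\v_{\p_x}(I^k)$ to minimising $\deg(f)$ over monomials $f=x^r y^s$ with $(I^k:f)=\p_x$. For such an $f$, the equality $(I^k:f)=\p_x$ is equivalent to the conjunction of $xf\in I^k$ and $y^{s'}f\notin I^k$ for every $s'\ge 0$; the second condition in turn is equivalent to asking that no minimal generator $x^\alpha y^\beta\in G(I^k)$ satisfies $\alpha\le r$.

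The key numerical input is that every $v=x^\alpha y^\beta\in G(I^k)$ has the form $\prod_{i=1}^{m}(x^{a_i}y^{b_i})^{k_i}$ with $\sum_i k_i=k$, so $\alpha=\sum_i k_i a_i\ge k a_m$, with equality forcing $k_m=k$. Hence the unique generator of $I^k$ attaining the minimum $x$-exponent $ka_m$ is $x^{ka_m}y^{kb_m}$, which indeed lies in $G(I^k)$ by Lemma \ref{Lemma:(xa1yb1)k_(xamybm)k}.

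For the upper bound I take $f:=x^{ka_m-1}y^{kb_m}$ of degree $k(a_m+b_m)-1$. Then $xf=x^{ka_m}y^{kb_m}\in I^k$ shows $x\in(I^k:f)$, and the bound $\alpha\ge ka_m>ka_m-1$ for every generator implies that none can divide $x^{ka_m-1}y^{kb_m+s'}$; thus $y^{s'}f\notin I^k$ for all $s'\ge 0$ and $(I^k:f)=\p_x$. For the matching lower bound, let $f=x^r y^s$ satisfy $(I^k:f)=\p_x$. The second condition forces $\alpha\ge r+1$ for every generator of $I^k$, hence $ka_m\ge r+1$; on the other hand, $xf\in I^k$ provides a generator with $\alpha\le r+1$. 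Therefore $r+1=ka_m$, and by the uniqueness observation the generator dividing $xf$ must be $x^{ka_m}y^{kb_m}$, which in turn forces $kb_m\le s$. Summing yields $r+s\ge(ka_m-1)+kb_m=k(a_m+b_m)-1$, completing the proof.

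The main technical step is the lower bound: it combines the uniqueness of the minimum-$x$-exponent generator $x^{ka_m}y^{kb_m}$ with the \emph{strong} form of the hypothesis $(I^k:f)=\p_x$, which forbids the entire vertical column $\{x^r y^{s'}:s'\ge s\}$ from ever entering $I^k$, to pin $f$ exactly one unit to the left of this corner.
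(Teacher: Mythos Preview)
Your proof is correct and follows essentially the same approach as the paper: both identify $x^{ka_m-1}y^{kb_m}$ as the unique degree-minimizing monomial with colon $\p_x$, relying on Lemma~\ref{Lemma:(xa1yb1)k_(xamybm)k} and the fact that $x^{ka_m}y^{kb_m}$ is the extreme generator of $I^k$. Your argument is somewhat more detailed---you work directly with the characterization of $(I^k:f)=\p_x$ via the minimum $x$-exponent rather than, as the paper does, invoking that $x^{ka_m}y^{kb_m}$ has the highest $y$-degree among generators---but the underlying idea is the same.
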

    \begin{proof}
    	The only generator $\overline{u}\in(I:\p_x)/I$ such that $(I:u) = \p_x$ is $x^{a_m-1}y^{b_m}$, for it has the largest $y$-degree. For each $k\geq 1$, from Lemma \ref{Lemma:(xa1yb1)k_(xamybm)k}, $x^{ka_m}y^{kb_m}\in G(I^k)$ and such generator has the highest $y$-degree. Thus, $\overline{u}= \overline{x^{ka_m-1}y^{kb_m}}$ is the only generator of $(I^k:\p_x)/I^k$ such that $(I^k:u)=\p_x$. Similarly, one can prove (b).
    \end{proof}
	
    In the next proposition, we show how to compute $\v_\m(I)$ for a non principal monomial ideal $I=I_{\bf a,b}\subset S$. For our convenience, if $c\ge1$, in the proof of the next result we regard $x^{-c}$ and $y^{-c}$ as $1$.
    
    \begin{Proposition}\label{2VarPropColon(x,y)}
    	Let $I=I_{\bf a,b}\subset S$ be a non principal monomial ideal. Then,
    	\begin{equation}\label{eq:I:m/Ixy}
    		(I :\m)/I = (x^{a_j-1}y^{b_{j+1}-1}\ :\ 1\le j\le m-1)/I.
    	\end{equation}
    	In particular,
    	$$
    	\v_\m(I)=\min\{a_j+b_{j+1}-2\ :\ 1\le j\le m-1\}.
    	$$
    \end{Proposition}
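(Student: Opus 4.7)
The plan is to compute the colon ideal $(I:\m)$ monomial by monomial. Since everything is monomial, the colon $(I:\m) = (I:x) \cap (I:y)$ is again a monomial ideal, so it suffices to characterize which monomials $x^c y^d$ belong to $(I:\m) \setminus I$. The condition is that $x^{c+1}y^d \in I$ and $x^c y^{d+1} \in I$, but $x^c y^d \notin I$.

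First I translate these three conditions into divisibility constraints involving the sequences ${\bf a}$ and ${\bf b}$. The condition $x^{c+1}y^d \in I$ gives some index $i$ with $a_i \le c+1$ and $b_i \le d$; the condition $x^c y^{d+1} \in I$ gives some index $j$ with $a_j \le c$ and $b_j \le d+1$; while $x^cy^d \notin I$ means no generator $x^{a_k}y^{b_k}$ divides $x^c y^d$. Confronting the non-divisibility condition at $k = i$ with $b_i \le d$ forces $a_i = c+1$; symmetrically at $k = j$ we get $b_j = d+1$. By the strict monotonicity of ${\bf a}$ and ${\bf b}$ this forces $i < j$.

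The key step, which is the main technical point, is to show that $j = i+1$. Indeed, if $i+1 < j$, then by monotonicity $a_{i+1} \le a_i - 1 = c$ and $b_{i+1} \le b_j - 1 = d$, so the generator $x^{a_{i+1}}y^{b_{i+1}}$ would divide $x^c y^d$, contradicting $x^c y^d \notin I$. Hence $j = i+1$, and we obtain $x^c y^d = x^{a_i - 1} y^{b_{i+1} - 1}$ for some $1 \le i \le m-1$. Note the exponents are non-negative because $a_{i} > a_{i+1} \ge 0$ gives $a_i \ge 1$ and $b_{i+1} > b_i \ge 0$ gives $b_{i+1} \ge 1$.

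Conversely, a direct check shows that each $x^{a_j - 1} y^{b_{j+1} - 1}$ lies in $(I:\m) \setminus I$: the generator $x^{a_j}y^{b_j}$ divides $x \cdot x^{a_j-1}y^{b_{j+1}-1}$ (since $b_j \le b_{j+1} - 1$), the generator $x^{a_{j+1}}y^{b_{j+1}}$ divides $y \cdot x^{a_j-1}y^{b_{j+1}-1}$ (since $a_{j+1} \le a_j - 1$), and no generator $x^{a_k}y^{b_k}$ divides $x^{a_j-1}y^{b_{j+1}-1}$, as $k \le j$ fails the $x$-exponent bound while $k \ge j+1$ fails the $y$-exponent bound. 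This establishes the equality \eqref{eq:I:m/Ixy}. Finally, since $I$ is non-principal, Proposition~\ref{Prop:Ass(I)xy}(c) gives $\m \in \Ass(I) = \Max(I)$, so Theorem~\ref{Thm:GRVtheorem}(c) yields
$$
\v_\m(I) \;=\; \alpha((I:\m)/I) \;=\; \min_{1 \le j \le m-1}\deg\bigl(x^{a_j-1}y^{b_{j+1}-1}\bigr) \;=\; \min\{a_j + b_{j+1} - 2 : 1 \le j \le m-1\},
$$
completing the proof.
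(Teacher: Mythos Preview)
Your proof is correct and takes a genuinely different route from the paper's. The paper computes $(I:\m)$ as the intersection $(I:\p_x)\cap(I:\p_y)$, writes each factor explicitly as $(x^{a_i-1}y^{b_i})_i$ and $(x^{a_j}y^{b_j-1})_j$, expands the intersection via $\lcm$'s, and then simplifies the resulting generating set by a case analysis on $i\le j$ versus $i>j$ to isolate the generators $x^{a_j-1}y^{b_{j+1}-1}$ that survive modulo $I$. Your argument bypasses this computation entirely: you characterize directly the monomials $x^cy^d$ lying in $(I:\m)\setminus I$, and the key step---forcing $j=i+1$ from the non-membership $x^cy^d\notin I$---identifies these monomials as precisely the ``inner corners'' $x^{a_i-1}y^{b_{i+1}-1}$ of the staircase. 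This is more conceptual and shorter; the paper's approach, on the other hand, produces the full generating set of $(I:\m)$ along the way, which could be useful elsewhere.

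One small inaccuracy: you write $\m\in\Ass(I)=\Max(I)$, but in general $\Ass(I)\ne\Max(I)$ here (for instance if $a_m>0$ then $\p_x\in\Ass(I)\setminus\Max(I)$). What you actually need, and what is certainly true, is just $\m\in\Max(I)$, which holds because $\m$ is the graded maximal ideal; Theorem~\ref{Thm:GRVtheorem}(c) then applies as you claim.
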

    \begin{proof}
    	Firstly, we compute $I:\m$. We have
    \begin{equation}\label{eq:I:mxy1}
    	\begin{aligned}
    	I:\m\ &=\ (I:\p_x)\cap (I:\p_y)\\
    	&=\ (x^{a_1-1}y^{b_1},\dots, x^{a_m-1}y^{b_m})\cap (x^{a_1}y^{b_1-1},\dots, x^{a_m}y^{b_m-1})\\
    	&=\ (\text{lcm}(x^{a_i-1}y^{b_i},x^{a_j}y^{b_j-1})\ :\ 1\le i\le m,\ 1\le j\le m)\\
    	&=\ (\text{lcm}(x^{a_i-1}y^{b_i},x^{a_{j+1}}y^{b_{j+1}-1})\ : \
    	1\le i\le m,\ 0\le j\le m-1)\\
    	&=\ (x^{\max\{a_i-1,a_{j+1}\}}y^{\max\{b_i,b_{j+1}-1\}}\ : \
    	1\le i\le m,\ 0\le j\le m-1).
    \end{aligned}
    \end{equation}

    Fix $j\in\{0,\dots,m-1\}$ and let $i\in\{1,\dots,m\}$.
    
    If $i\le j$, we have $a_i\ge a_j>a_{j+1}$ and $b_i\le b_j<b_{j+1}$. Therefore $x^{a_{j+1}} | x^{a_i-1}$ and $y^{b_i} | y^{b_{j+1}-1}$. Hence,
    \begin{equation}\label{eq:I:mxy2}
    	x^{a_j-1}y^{b_{j+1}-1}\in(I:\m)\ \ \text{divides} \ \ x^{\max\{a_i-1,a_{j+1}\}}y^{\max\{b_i,b_{j+1}-1\}}\ \ \text{for}\ \ i\le j.
    \end{equation}
    
    If $i>j$, we have $a_i-1\le a_{j+1}$ and $b_i\ge b_{j+1}-1$, so $x^{a_i-1} | x^{a_{j+1}}$ and $y^{b_{j+1}-1} | y^{b_i}$. Hence,
    \begin{equation}\label{eq:I:mxy3}
    	x^{a_{j+1}}y^{b_i}\in(I:\m)\ \ \text{divides} \ \ x^{\max\{a_i-1,a_{j+1}\}}y^{\max\{b_i,b_{j+1}-1\}}\ \ \text{for}\ \ i> j.
    \end{equation}
    
    Thus, by equations (\ref{eq:I:mxy1}), (\ref{eq:I:mxy2}) and (\ref{eq:I:mxy3}) we have
    \[
    I:\m\ =\ (x^{a_j-1}y^{b_{j+1}-1}, x^{a_{j+1}}y^{b_i}\ :\ 1\le j\le m-1,\ j+1\le i\le m).
    \]
    Note that for each $i\ge j+1$ we have $x^{a_{j+1}}y^{b_i}\in I$. It is clear that $x^{a_j-1}y^{b_{j+1}-1}\notin I$, for all $j=1,\dots,m-1$. Hence, equation (\ref{eq:I:m/Ixy}) follows.
    
    The claim about $\v_\m(I)$ follows from (\ref{eq:I:m/Ixy}) and Theorem \ref{Thm:GRVtheorem}(c).
    \end{proof}
    
    As a consequence of our discussion, we obtain the next formula that shows us how to compute the $\v$-number of $I_{\bf a,b}$ solely in terms of the sequences ${\bf a}$ and ${\bf b}$.
    \begin{Theorem}\label{Thm:vmIab,xy}
    	Let $I=I_{\bf a,b}\subset S$ be a monomial ideal. Then
    	\[
    	\v(I)=
    	\begin{cases}
    		\min\{a_i+b_{i+1}-2\ :\ 1\le i\le m-1\},\ \text{if}\ b_1=0\ \text{and}\ a_m=0,\\
    		\min\{a_1+b_1-1, a_i+b_{i+1}-2\ :\ 1\le i\le m-1\},\ \text{if}\ b_1\neq0\ \text{and}\ a_m=0,\\
    		\min\{a_m+b_m-1, a_i+b_{i+1}-2\ :\ 1\le i\le m-1\},\ \text{if}\ b_1=0\ \text{and}\ a_m\neq 0,\\
    		\min\{a_1+b_1-1,a_m+b_m-1, a_i+b_{i+1}-2\ :\ 1\le i\le m-1\},\ \text{otherwise.}
    	\end{cases}
    	\]
    \end{Theorem}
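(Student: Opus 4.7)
The plan is to assemble Theorem \ref{Thm:vmIab,xy} from the three ``local'' computations already in hand, namely Corollary \ref{Cor:pxpylinV} (specialized to $k=1$) for the heights-one primes $\p_x=(x)$ and $\p_y=(y)$, and Proposition \ref{2VarPropColon(x,y)} for the maximal ideal $\m=(x,y)$. The glue is Theorem \ref{Thm:GRVtheorem}(b), which says $\v(I)=\min\{\v_\p(I):\p\in\Ass(I)\}$, together with Proposition \ref{Prop:Ass(I)xy}, which pins down exactly which of $\p_x,\p_y,\m$ lie in $\Ass(I)$ in terms of the numerical data $a_m$, $b_1$ and the length $m$ of the generating sequences.

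First I would observe that, by Proposition \ref{Prop:Ass(I)xy}, membership of $\p_x$ in $\Ass(I)$ is governed by $a_m>0$, membership of $\p_y$ by $b_1>0$, and membership of $\m$ by $I$ not being principal, i.e.\ $m>1$. Next, by Corollary \ref{Cor:pxpylinV} evaluated at $k=1$, when $a_m>0$ we have $\v_{\p_x}(I)=a_m+b_m-1$, and when $b_1>0$ we have $\v_{\p_y}(I)=a_1+b_1-1$. Finally, by Proposition \ref{2VarPropColon(x,y)}, whenever $I$ is not principal we have
\[
\v_\m(I)=\min\{a_i+b_{i+1}-2\ :\ 1\le i\le m-1\}.
\]
With these three ingredients, the statement reduces to a case split on whether each of $a_m$ and $b_1$ is zero or positive.

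In each of the four cases I would write $\Ass(I)$ explicitly and then apply Theorem \ref{Thm:GRVtheorem}(b) to take the minimum of the relevant $\v_\p(I)$. For example, if $b_1\ne 0$ and $a_m\ne 0$ (and then necessarily $m\ge 2$, so $\m$ is associated too), we get $\Ass(I)=\{\p_x,\p_y,\m\}$ and the minimum is over $\{a_1+b_1-1,\,a_m+b_m-1,\,a_i+b_{i+1}-2:1\le i\le m-1\}$, which is the ``otherwise'' line of the theorem. The remaining three cases are handled identically, simply dropping whichever of $\v_{\p_x}(I)$, $\v_{\p_y}(I)$ is absent. The degenerate possibility $m=1$ (principal ideal) fits in under the convention that a minimum over the empty set is $+\infty$: then the $a_i+b_{i+1}-2$ term disappears and one easily checks the remaining formula agrees with the direct value $a_1-1$ or $b_1-1$ obtained from Corollary \ref{Cor:pxpylinV}.

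There is no real obstacle here; the work is bookkeeping. The only subtle point to verify carefully is that in the non-principal case the minimum displayed really is $\v(I)$, which requires simultaneously using Proposition \ref{Prop:Ass(I)xy} to include $\m$ in $\Ass(I)$ and Proposition \ref{2VarPropColon(x,y)} to compute $\v_\m(I)$; I would state and check this just once, up front, and then run the four cases on autopilot.
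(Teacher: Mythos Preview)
Your approach is exactly the paper's: combine Proposition \ref{Prop:Ass(I)xy}, Corollary \ref{Cor:pxpylinV} (at $k=1$), and Proposition \ref{2VarPropColon(x,y)} via Theorem \ref{Thm:GRVtheorem}(b), then treat the principal case $m=1$ separately. One small slip to fix: $b_1\ne 0$ and $a_m\ne 0$ do \emph{not} force $m\ge 2$ (e.g.\ $I=(xy)$), so $\m$ need not be associated in that case; but since you handle $m=1$ separately and the formula still reads correctly there (the $a_i+b_{i+1}-2$ terms simply disappear), this is harmless once the parenthetical is removed.
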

	\begin{proof}
		Suppose $I$ is non principal. Then, the statement follows by combining Proposition \ref{Prop:Ass(I)xy}, Corollary \ref{Cor:pxpylinV} and Proposition \ref{2VarPropColon(x,y)}. Now, if $I$ is principal, the above formulas also hold. Indeed, in this case $m=1$ and in the above last three minimums one does not have to consider the terms $a_i+b_{i+1}-2$ because $m-1=0$.
	\end{proof}
    
    Let $I\subset S=K[x_1,\dots,x_n]$ be a monomial ideal. Then $$\v(I^k)=\alpha(I)k+b,$$ for all $k\gg0$. By \cite[Theorem 2.3(c)]{F2023} we have that $b\ge-1$. Note also that $a=\alpha(I)\ge1$. On the other hand, let $f(k)=ak+b$ be any linear function in $k$, with $a\ge1$ and $b\ge-1$. In the next theorem, we prove that there exists a monomial ideal $I\subset K[x,y]$ such that $\v(I^k)$ agrees with $f(k)$ for all $k\ge1$.
    
    \begin{Theorem}\label{Thm:v(I^k)=ak+b}
    	Let $a\ge1$ and $b\ge-1$ be integers. Then, there exists a monomial ideal $I\subset S=K[x,y]$ such that
    	$$
    	\v(I^k)=ak+b,\ \ \textit{for all}\ \ k\ge1.
    	$$
    \end{Theorem}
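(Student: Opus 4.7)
The plan is to exhibit a single explicit family of monomial ideals realizing every prescribed linear function. Given integers $a\ge 1$ and $b\ge -1$, I propose to take
$$
I \ = \ (x^a,\, x^{a-1}y^{b+2})\ \subset\ S=K[x,y].
$$
Since $b+2\ge 1$, the two listed monomials are incomparable under divisibility, so they form the minimal monomial generating set $G(I)$.

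First, I would compute $G(I^k)$ explicitly for every $k\ge 1$. Because $I$ has exactly two generators, $I^k$ is generated by the products $(x^a)^i(x^{a-1}y^{b+2})^{k-i}$ for $0\le i\le k$. A direct calculation gives $(x^a)^i(x^{a-1}y^{b+2})^{k-i} = x^{ak-(k-i)}y^{(b+2)(k-i)}$; reindexing $j=k-i$, the generators become $x^{ak-j}y^{(b+2)j}$ for $j=0,\dots,k$. These $k+1$ monomials are pairwise incomparable under divisibility, so they constitute $G(I^k)$. Thus $I^k = I_{\mathbf{a},\mathbf{b}}$ with $m=k+1$, $a_\ell = ak-(\ell-1)$ and $b_\ell = (b+2)(\ell-1)$ for $\ell=1,\dots,k+1$; in particular $b_1=0$ and $a_m = k(a-1)$.

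Next I would apply Theorem \ref{Thm:vmIab,xy} to $I^k$, splitting on whether $a=1$ (so $a_m=0$, first case of the formula) or $a\ge 2$ (so $a_m>0$, third case). The quantities $a_i+b_{i+1}-2 = ak + (b+1)i - 1$ for $1\le i\le k$ form a non-decreasing sequence in $i$ since $b+1\ge 0$, so their minimum equals $ak+b$, attained at $i=1$ (and equal to the constant value $ak-1=ak+b$ when $b=-1$). In the case $a\ge 2$, one must additionally take into account the term $a_m+b_m-1 = k(a+b+1)-1$; a short calculation shows
$$
(a_m+b_m-1)-(ak+b)\ =\ (k-1)(b+1)\ \ge\ 0,
$$
so this additional term never lowers the minimum. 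Hence $\v(I^k)=ak+b$ for every $k\ge 1$.

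The only delicate point is the boundary case $b=-1$, where the factor $b+1$ vanishes and several otherwise distinct expressions collapse to the common value $ak-1$; this also encompasses the principal-like behaviour expected in that regime. A careful inspection confirms that all relevant terms still coincide, so the formula remains correct. Modulo this inspection, the theorem follows by a direct application of Theorem \ref{Thm:vmIab,xy}.
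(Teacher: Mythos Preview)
Your proposal is correct and follows essentially the same approach as the paper: you choose the identical ideal $I=(x^a,x^{a-1}y^{b+2})$, compute $G(I^k)$ in the same way, and arrive at the same minimum $ak+b$. The only cosmetic difference is that you invoke the packaged formula of Theorem~\ref{Thm:vmIab,xy} directly, whereas the paper computes $\v_{\p_x}(I^k)$ and $\v_{\mathfrak m}(I^k)$ separately via Corollary~\ref{Cor:pxpylinV} and Proposition~\ref{2VarPropColon(x,y)} and then takes their minimum; the arithmetic is identical.
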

    \begin{proof}
    	We claim that $I = (x^a, x^{a-1}y^{b+2})=x^{a-1}(x,y^{b+2})$ satisfies our assertion. For this aim, let us show that $I^k = (x^{ka-i}y^{i(b+2)} : 0\le i \le k)$ for all $k\ge1$. Indeed,
    	\begin{eqnarray*}
    	I^k &=& x^{k(a-1)}(x,y^{b+2})^k=x^{k(a-1)}\sum_{i=0}^k(x^{k-i}y^{i(b+2)})\\
    	&=& (x^{ka-i}y^{i(b+2)} : 0\le i \le k).
        \end{eqnarray*}
        Since $ka>ka-1>\dots>ka-k$ and $b+2<2(b+2)<\dots<k(b+2)$, it follows that $G(I^k)=\{x^{ka-i}y^{i(b+2)} : 0\le i \le k\}$.\smallskip
        
        Note that $\Ass^\infty(I)=\{\p_x,\m\}$ if $a>1$, and $\Ass^\infty(I)=\{\m\}$ if $a=1$.
        
        If $a>1$, by Corollary \ref{Cor:pxpylinV}(a) we have $\v_{\p_x}(I^k)=k(a+b+1)-1$.
        
        Whereas, by Proposition \ref{2VarPropColon(x,y)},
        \begin{align*}
        	\v_\m(I^k)\ &=\ \min\{(ka-i)+(i+1)(b+2)-2:0\le i\le k-1\}\\
            &=\ \min\{ka+(i+1)b+i:0\le i\le k-1\}\\
            &=\ ak+b.
        \end{align*}
    	If $a>1$, then $\v(I^k)=\min\{\v_{\p_x}(I^k),\v_{\m}(I^k)\}=\min\{k(a+b+1)-1,ak+b\}=ak+b$. Otherwise, if $a=1$, then $\v(I^k)=\v_\m(I^k)=ak+b$ once again.
    \end{proof}

\end{document}